\theoremstyle{plain}
\newtheorem{thm}{Theorem}[section]
\newtheorem{prop}[thm]{Proposition}
\newtheorem{lem}[thm]{Lemma}
\theoremstyle{definition}
\newtheorem{defn}[thm]{Definition}
\theoremstyle{remark}
\newtheorem*{thma}{{\bf Theorem A}}
\newcommand{\SSS}{\mathcal{S}}
\newcommand{\C}{\mathcal{C}}
\newcommand{\QQ}{{\bar{\mathbb{Q}}}}
\newcommand{\W}{{\mathcal{W}}}
\newcommand{\CC}{{\mathbb{C}}}
\newcommand{\OO}{{\mathcal{O}}}
\newcommand{\Q}{\mathbb{Q}}
\newcommand{\R}{\mathbb{R}}
\newcommand{\Z}{\mathbb{Z}}
\newcommand{\HH}{\mathcal{H}}
\newcommand{\E}{\mathcal{E}}
\newcommand{\A}{{\mathcal{A}}}
\newcommand{\RR}{{\mathcal{R}}}
\author{Dipramit Majumdar}
\title{Endoscopic Transfer between Eigenvarieties for definite Unitary groups}
\date{}
\begin{document}
\maketitle
\noindent

\begin{abstract}
In this paper, we extend the endoscopic transfer of definite unitary group $U(n)$, which sends a pair of automorphic forms of $U(n_{1}),U(n_{2})$ to an automorphic form of $U(n_{1}+n_{2})$, to finite slope $p$-adic automorphic forms for definite unitary groups by constructing a rigid analytic map between eigenvarieties $\E_{n_{1}} \times \E_{n_{2}} \to \E_{(n_{1}+n_{2})}$, which at classical points interpolates endoscopic transfer map. 
\end{abstract}

\tableofcontents

\section{Introduction}
 Let us fix a rational prime $p$ and embeddings, 
 \begin{equation*}
 \iota_{p}: \QQ \to \QQ_{p} ,\text{ and  } \iota_{\infty}: \QQ \to \CC.
 \end{equation*}
 For any prime $\ell$, we also fix an embedding of $W_{\Q_{\ell}} \hookrightarrow W_{\Q}$, where $W_{F}$ denotes the Weil group of $F$.
Let $E/\Q$ be a quadratic imaginary field and $G/\Q$ be a unitary group in $n \geq 1$ variables attached to $E/\Q$. We assume that $G(\R)$ is the compact real unitary group (that is $G$ is definite), and that $G(\Q_{p}) \simeq GL_{n}(\Q_{p})$. We fix such a $G$ and denote it by definite unitary group $U(n)$.\\
By recent work of Clozel, Harris, Labesse, Ng{\^o} and many other mathematicians, we made significant progress in the Langlands program for the unitary group $U(n)$. In particular we know how to attach a Galois representation $\rho_{\pi}$ to an automorphic form $\pi$ of $U(n)$. Since the fundamental lemma for the unitary group is known, in principle we know in most cases the classical endoscopic transfer for $U(n)$ (for $n \leq 3$, this is due to Rogawski(\cite{MR1081540}, in general see \cite{HarrisBook1}, and volume 2). Let $n_{1}, n_{2}$ be two natural numbers and $n=n_{1}+n_{2}$, then $U(n_{1}) \times U(n_{2})$ is an endoscopic subgroup of $U(n)$. Let $\pi_{1}$ and $\pi_{2}$ be automorphic representations of $U(n_{1})$ and $U(n_{2})$ respectively. Then the endoscopic transfer of $(\pi_{1},\pi_{2})$ is an automorphic representation $\pi$ of $U(n)$. If $\rho_{\pi_{1}}, \rho_{\pi_{2}},\rho_{\pi}$ are the Galois representations associated to $\pi_{1}, \pi_{2},\pi$ respectively, then we have $\rho_{\pi} = \rho_{\pi_{1}} \oplus \rho_{\pi_{2}}$.\\
Another aspect of study of automorphic forms, initiated by Serre, is that, a prime number $p$ being chosen, the notion of classical automorphic forms can be extended to the notion of $p$-adic automorphic forms. Using ordinary $p$-adic automorphic forms, Hida constructed family of ordinary $p$-adic automorphic forms, known as Hida family, for various reductive groups. Coleman and Mazur \cite{Coleman.1998}, extended Hida's work to non-ordinary $p$-adic modular forms, and constructed tame level $1$ eigencurve. Buzzard \cite{Buzzard.2007} formalized the construction of Coleman and Mazur, and gave a machinery, called the ``Eigenvariety Machine", which gives a $p$-adic family of automorphic forms, known as the eigenvariety, as output. He used it to construct tame level $N$ eigencurve and eigenvariety for Hilbert modular forms. Using Buzzard's eigenvariety machine, Chenevier \cite{Chenevier.2004} and Emerton \cite{Emerton.2006}, seperately constructed eigenvariety for definite unitary group $U(n)$, which we denote by $\E_{n}$. The eigenvariety $\E_{n}$ is a reduced rigid analytic space, which is equidimensional of dimension $n$.\\
In this paper, we construct $p$-adic endoscopic transfer map for definite unitary group $U(n)$. Similar work was done by Chenevier \cite{MR2111512}, where he constructed the $p$-adic Jacques-Langlands correspondence for $GL_{2}$. We follow a similar approach. More precisely, let $\E_{n}$ denote the eigenvariety associated to definite unitary group $U(n)$, as constructed by Chenevier. In the eigenvariety $\E_{n}$, every automorphic form $\pi$ of $U(n)$ appears not once but (roughly) $n!$ times as a point of $\E_{n}$, each time augmented with a little combinatorial structure called a refinement $\RR$, which is an ordering of the eigenvalues of semi-simple conjugacy class associated to $\pi_{p}$. We construct a rigid analytic map 
$$f: \E_{n_{1}} \times \E_{n_{2}} \to \E_{n} $$
which at ``nice" classical points interpolate classical endoscopic transfer. Further we show that such a map is unique. 

\begin{thma}
There exists a unique map of eigenvarieties 
$$f_{(n_{1},n_{2})}: \E_{n_{1}} \times \E_{n_{2}} \to \E_{n}$$
where $n = n_{1}+ n_{2}$, such that a ``nice" classical point of $\E_{n_{1}} \times \E_{n_{2}}$, $((\pi_{1}, \RR_{1}),(\pi_{2},\RR_{2}))$ maps to a classical point of $\E_{n}$, $(\pi,\RR)$, where $\pi$ is the endoscopic transfer of $\pi_1$ and $\pi_2$ and $\RR = (\mu_{1}(Frob_{p})\RR_{1}, \mu_{2}(Frob_{p})\RR_{2})$, here $\mu_1$ and $\mu_2$ as in (\ref{mu}).
\end{thma}

\noindent We call the image of the map $f_{(n_{1},n_{2})}$ the endoscopic component of type $(n_{1},n_{2})$ in $\E_{n}$, and denote it by $\E_{(n_{1},n_{2})}$. In particular, we define $p$-adic endoscopic transfer $\pi$ for any pair of $p$-adic automorphic forms $(\pi_{1},\pi_{2})$ of the unitary groups $U(n_{1}), U(n_{2})$ as the image of $(\pi_1,\pi_2)$ under the map $f_{(n_{1},n_{2})}$.\\

\noindent Key ingredient of our proof is a comparison theorem due to Chenevier (see Theorem \ref{compthm}), which allows us to construct the map, by constructing maps between the fibers of the eigenvarieties over Zariski dense set of points in the weight space $\W$.\\

\noindent {\bf Plan for the paper:} In section 2, of the paper we briefly recall the construction of the eigenvariety $\E_{n}$, and a few of it's relevant properties. In section 3, we review some known (and expected) results about the classical endoscopic transfer of automorphic forms of definite unitary group $U(n)$. In the final section, we give the construction of the rigid analytic map between the eigenvarieties $\E_{n_{1}} \times \E_{n_{2}}$ and $\E_{n}$.  
 
 \section{ Brief review of the construction of eigenvariety for $U(n)$}
 In this section we briefly recall the construction of Eigenvariety for $U(n)$ due to  Chenevier. For details we refer readers to \cite[Chapter 7]{Bellaiche.2009} , \cite{Chenevier.2004}.

\subsection{Construction}\label{eigenconstrct}
The eigenvariety depends on the choice of Hecke algebra $H$, so first we fix a Hecke algebra for our purpose. We fix a subset $S_0$ (which have Dirichlet density 1) of primes $l$ ($l \neq p$) split in $E$, such that $G(\Q_{l}) \simeq GL_n(\Q_{l})$ and $G(\Z_{l})$ is a maximal compact subgroup. We set $\HH^{ur} = \otimes_{l \in S_{0}} \HH_{l}$, where $\HH_{l} \simeq \QQ_{p}[T_{1,l},\cdots,T_{n,l},T_{1,l}^{-1},\cdots, T_{n,l}^{-1}]^{S_n}$ is the Hecke algebra for $GL_{n}(\Q_{l})$, with respect to it's maximal compact subgroup $G(\Z_{l})$. Let $T$ be the diagonal torus of $G(\Q_{p}) \cong GL_n(\Q_{p})$, and $T^{0}= T \cap GL_n(\Z_{p})$. Let $U \simeq T/T^{0}$ be the subgroup of diagonal matrices whose entries are integral powers of $p$, $U^{-} \subset U$ be the submonoid whose elements have the form $diag(p^{a_{1}},\dots,p^{a_{n}})$, with $a_{i} \in \Z$ and $a_{i} \geq a_{i+1}$ for all $i$. Let $\A_{p}^{-}$ be the subring of Hecke-Iwahori algebra generated by $u \in U^{-}$ and $\A_{p}$ be the Atkin-Lehner algebra contained in the Hecke-Iwahori algebra of $G(\Q_{p}) \simeq GL_n(\Q_{p})$, with coefficient in $\QQ_{p}$. Then $\A_{p} \cong \QQ_{p}[U]$ and $\A_{p}^{-} \cong \QQ_{p}[U^{-}]$(see \cite[Prop. 6.4.1]{Bellaiche.2009}). Let us define $\HH := \A_{p} \otimes \HH^{ur}$ to be the Hecke Algebra. Let us also define $\HH^{-} := \A_{p}^{-} \otimes \HH^{ur}$.\\

\noindent The weight space $\W_n$ is the rigid analytic space over $\Q_{p}$ defined as $\W_n := \mathrm{Hom}_{group,cts}(T^{0}, \mathbb{G}_{m}^{rig})$. It is isomorphic to finite disjoint union of unit open $n$-dimensional balls. We see $\Z^n$ embedded in $\W_n$ via the map $ (k_1, \cdots, k_n) \mapsto ((x_1,\cdots,x_n) \mapsto x_{1}^{k_{1}}\cdots x_{n}^{k_{n}}) $. Let $V \subset \W_{n}$  be an affinoid open. Then Chenevier defined the space of $p$-adic automorphic forms of weight in $V$, radius of convergence $r$, $\mathcal{S}(V,r)$( \cite[Section 7.3.4]{Bellaiche.2009})as follows :
Let $I$ denote the Iwahori subgroup of $GL_{n}(\Q_{p})$, and $B$ the standard Borel subgroup. Let $\bar{N}_{0}$ be the subgroup of lower triangular matrices of $I$. Let $\chi: T^{0} \to \OO(\W)^{\ast}$ denote the tautological character and $\chi_{V}: T^{0} \to A(V)^{\ast}$ the induced continuous character. For $r \geq r_{V}$, define the space of functions:
$$\C(V,r) = \{ f: IB \to A(V) \mid f(xb)= \chi_{V}(b)f(x), \forall x \in IB, b\in B, f|_{\bar{N}_{0}} \text{ is } r \text{ analytic} \}.$$ 
$C(V,r)$ is a $M:= IU^{-}I$ module. For a $M$-module $E$, define an $\HH^{-}$-module $F(E)$ as:
$$F(E) = \{ f: G(\Q) \backslash G(\mathbb{A}_{f}) \to E \mid f(g(1 \times k_{p})) = k_{p}^{-1}f(g), \forall g \in G(\mathbb{A}_{f}), k_{p} \in I, f \text{ is smooth outside }p \}.$$ 
We define the space of $p$-adic automorphic forms of weights in $V$, radius of convergence $r$ as the $\HH^{-}$-module $\SSS(V,r):= F(\C(V,r))$. $\SSS(V,r)$ is an ortho-normalizable Banach $A(V)$-module with property (Pr) (as in \cite{Buzzard.2007}). It is equipped with an $A(V)$ linear action of $\HH^{-} = \A_{p}^{-} \otimes \HH^{ur}$. The modules $\{ \mathcal{S}(V,r),V,r \geq r_{V} \}$ are compatible in a sense defined there.\\
\noindent If $\underline{k}$ is a point in the weight space, the above construction remains valid, we get a module $M_{(\underline{k})} = \SSS(\underline{k},r)$. Moreover if $\underline{k} \in \Z^{n,-}$, the choice of an highest-weight vector in $W_{\underline{k}}(\Q_{p})$ with respect to the standard Borel $B$ gives a natural $\HH^{-}$-equivariant inclusion:
$$ F(W_{\underline{k}}(L)^{\ast}) \otimes \delta_{\underline{k}} \hookrightarrow \SSS(\underline{k},0), $$
where $\delta_{\underline{k}}$ is a character of $U$ obtained by restricting $W_{\underline{k}}(\Q_{p})$ to $U$. Image of the map is referred as the space of classical $p$-adic automorphic forms of weight $\underline{k}$. \\

\noindent Let $u_0 = diag(p^{n-1},p^{n-2},\cdots,p,1)$. Then $u_0$ acts compactly on the ({\bf Pr})-family of Banach modules $\{ \mathcal{S}(V,r),V,r \geq r_{V} \}$. Let $P=P_{u_{0}}$ be the Fredholm series associated to $u_0$. Then for $r \geq r_{V}$, we have,
\begin{equation*}
P(T)_{|V} = det (1- Tu_{0|\mathcal{S}(V,r)}) \in 1+ TA(V)\{\{T\}\}.
\end{equation*}
Fix a $\nu \in \R$ such that $V$ ($P(T)$) is adapted to $\nu$ (see \cite[Definition II.1.8]{CourseBook}, also see \cite{Buzzard.2007}). Then in $A(V)\{\{T\}\}$, $P$ has an unique factorization $P = QR$, with $Q \in 1+ TA(V)[T]$ and we have a decomposition of $A(V)$ Banach module $\SSS(V,r)$ as,
\begin{equation}\label{module}
 \mathcal{S}(V,r) = M(V)^{\leq \nu} \oplus N(V,r)
\end{equation}
\noindent which is $\HH^{-}$ stable and such that:
\begin{itemize}
\item $M(V)^{\leq \nu}$ is a finite projective $A(V)$-module which is independent of $r \geq r_{V}$.
\item The characteristic polynomial of $u_0$ on $M(V)^{\leq \nu}$ is $Q^{rec}(T)$, the reciprocal polynomial on $Q$, and $Q^{rec}(u_{0})$ is invertible on $N(V,r)$.
\end{itemize} 

\noindent In particular the eigenvalues of $U_{p}$ on $M(V)^{\leq \nu}$ has valuation less than or equal to $\nu$. When $V$ is a point $\underline{k}$, we similarly get a finite projective module $M_{\underline{k}}^{\leq \nu}$.\\

\noindent The local piece of the eigenvariety is by definition the maximal spectrum of the $A(V)$-algebra generated by the image of $\HH = \HH^{-}[u_0]^{-1}$ in $ End_{A(V)}(M(V)^{\leq \nu})$. By Buzzard's eigenvariety machine (see \cite{Buzzard.2007}) these pieces glue together to form the eigenvariety (for details see \cite[Section 7.3.6]{Bellaiche.2009}), which we will denote by $\E_{n}$. The eigenvariety is reduced $p$-adic analytic space $\E_{n}$, equipped with a ring homomorphism $\psi: \HH \to \OO(\E_{n})^{rig}$, and an analytic map $\omega: \E_{n} \to \W$.

\subsection{Points on the eigenvariety}

Let $\underline{k} = (k_1, \cdots, k_n) \in \Z^{n}$ such that $k_1 \geq k_2 \geq \cdots \geq k_n$.
A $p$-refined automorphic representation of weight $\underline{k}$ is a tuple $( \pi, \RR)$ such that,
\begin{itemize}
\item $\pi$ is an irreducible automorphic representation of $G$.
\item $\pi_{\infty} \simeq_{\iota_{\infty}} W_{\underline{k}}(\CC)$.
\item $\pi_{p}$ is unramified and $\RR$ is an accessible refinement of $\pi_{p}$.\cite[Section 6.4.4]{Bellaiche.2009}
\end{itemize}
Let us recall that an refinement of $\pi_{p}$ is an ordering 
\begin{equation*}
\RR = (\phi_1,\cdots,\phi_{n})
\end{equation*}
of the eigenvalues of the Langlands conjugacy associated to $\pi_{p}$. Equivalently, it is a character $\chi: U \to \CC^{\ast}$, sending $(1,\cdots,1,p,1,\cdots,1)$ to $\phi_{i}$. It is called accessible if $\chi \delta_{B}^{-1/2}$ occurs in $\pi_{p}^{I}$, here $B$ is the standard Borel subgroup, $I$ is the Iwahori subgroup and $\delta_{B}$ is the modulus character.

\noindent A $p$-refined automorphic representation of weight $\underline{k}$ is a tuple $( \pi, \RR)$, which is viewed as a point in the eigenvariety in the following manner:\\
Let $\delta_{\underline{k}} : U \to p^{\Z}$ be the character sending $diag(u_1,\cdots,u_{n}) \to u_{1}^{k_{1}}\cdots u_{n}^{k_{n}}$. Then there is a unique ring homomorphism $\psi_{p}: \A_{p} \to \CC$, such that 
\begin{equation}\label{psip}
\psi_{p|U} = \chi \delta_{B}^{-1/2} \delta_{\underline{k}}.
\end{equation}
Moreover we also have a ring homomorphism, $\psi_{ur}: \HH^{ur} \to \CC$, since $\pi^{G(\hat{\Z}_{S_{0}})}$ is one dimensional. Hence we have a system of $\HH$ eigenvalue $\psi_{p} \otimes \psi_{ur}$. It is in-fact $\QQ$-valued. To the $p$-refined automorphic representation $(\pi, \RR)$ of weight $\underline{k}$, we associate a $\QQ_{p}$valued system of Hecke eigenvalues $\psi_{(\pi,\RR)} = \iota_{p}\iota_{\infty}^{-1}(\psi_{p} \otimes \psi_{ur})$.\\

\noindent Let $\mathcal{Z}_{0} \subset \mathrm{Hom}_{ring}(\HH,\QQ_{p}) \times \Z^{n}$ be the set of pairs $(\psi_{(\pi,\RR)} ,\underline{k})$ associated to all $p$-refined automorphic form $(\pi,\RR)$ of weight $\underline{k}$. For a fixed $\mathcal{Z} \subset \mathcal{Z}_{0}$, the associated eigenvariety $\E_{n}$ comes with an accumulation and Zariski dense subset $Z \subset \E_{n}(\QQ_{p})$, such that the natural evaluation map $\E_{n}(\QQ_{p}) \to  \mathrm{Hom}_{ring}(\HH,\QQ_{p})$, sending $x \mapsto \psi_{x} := (h \mapsto \psi(h)(x))$, induces a bijection between $Z$ and $\mathcal{Z}$ sending $z \mapsto (\psi_{z}, \omega(z))$. (We might think $\E_{n}$ as ``Zariski closure" of the set $\mathcal{Z}$.)\\

\subsubsection{Control Theorem}
Let $\Z^{n,-} = \{(a_1,\cdots,a_n)| a_{1} > \cdots > a_{n} \}$. Let $\underline{k} = (k_1,\cdots,k_n) \in \Z^{n,-}$.
For this section let $V = \underline{k}$, be a closed point. Let $U^{--} = \{diag(p^{a_1},\cdots,p^{a_{n}})| a_{i} \in \Z, a_1 > \cdots > a_{n} \}$.
An element $ f \in \mathcal{S}(\underline{k},r)$ is of finite slope if for some (hence for all)$u \in U^{--}$ ,
\begin{itemize}
\item $L[u].f \subset  \mathcal{S}(\underline{k},r)$ is finite dimensional
\item $u_{|L[u].f} $ is invertible.
\end{itemize}
Hence the finite slope elements form an $L$-subspace $\mathcal{S}(\underline{k},r)^{fs}$ of $\mathcal{S}(\underline{k},r)$, and the $\A_{p}^{-}$ module structure on $\mathcal{S}(\underline{k},r)^{fs}$ extends to $\A_{p}$ module structure \cite[Section 7.3.5]{Bellaiche.2009}. Note that, $M_{\underline{k}}^{\leq \nu}$ as in (\ref{module}) , is contained in $\mathcal{S}(\underline{k},r)^{fs}$. We have a natural inclusion of $\HH$-modules $ F(W_{\underline{k}}(L)^{\ast}) \otimes \delta_{\underline{k}} \subset \SSS(\underline{k},r)^{fs} \subset \SSS(\underline{k},0) $, for all $r \in \mathbb{N}$.\\

\noindent The following proposition allows us to determine when a finite slope overconvergent eigenform is classical.
\begin{prop}\label{chencontrolthm}\cite[Proposition 7.3.5]{Bellaiche.2009}\cite[Proposition 4.7.4]{Chenevier.2004}
Let $f \in \SSS(\underline{k},r)^{fs} \otimes_{L} \QQ_{p}$ be an eigenform for all $u \in U \subset \A_{p}$. Let $u_{0} = diag(p^n-1,\cdots,p,1)$, write $u_{0}(f) = \lambda f$ for $\lambda \in \QQ_{p}^{\ast}$. If we have,
\begin{equation}
v(\lambda) < 1+ min_{i=1}^{n-1} (k_{i} - k_{i+1}),
\end{equation}
then $f$ is classical.
More precisely, under the same condition the full generalized $\A_{p}$-eigenspace of $f$ in $\SSS(\underline{k},r)^{fs} \otimes_{L} \QQ_{p}$ is included in the subspace $F(W_{\underline{k}}(L)^{\ast}) \otimes_{L} \QQ_{p}$ of classical forms.
\end{prop}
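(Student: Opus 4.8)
The plan is to reduce the classicality of $f$ to a lower bound for the $u_{0}$-slopes on the cokernel of the inclusion of classical forms, and then to obtain that bound from the first differential of the Bernstein--Gelfand--Gelfand resolution of $W_{\underline{k}}$. Write $\SSS^{\mathrm{cl}}=F(W_{\underline{k}}(L)^{\ast})\otimes\delta_{\underline{k}}\subset\SSS(\underline{k},r)^{fs}$ and let $\mathcal{Q}$ be the cokernel, so that $0\to\SSS^{\mathrm{cl}}\otimes_{L}\QQ_{p}\to\SSS(\underline{k},r)^{fs}\otimes_{L}\QQ_{p}\to\mathcal{Q}\to 0$ is $\HH$-equivariant, in particular $u_{0}$-equivariant. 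Since $\lambda\neq 0$, the generalized $\lambda$-eigenspace of $u_{0}$ on the middle term is finite dimensional (it lies in $M_{\underline{k}}^{\leq\nu}\otimes\QQ_{p}$ for any $\nu\geq v(\lambda)$) and the sequence stays exact on passing to it. Hence it suffices to prove
$$(\star)\qquad\text{every generalized eigenvalue of }u_{0}\text{ on }\mathcal{Q}\text{ has valuation }\geq 1+\min_{1\leq i\leq n-1}(k_{i}-k_{i+1}).$$
Granting $(\star)$, the image of $f$ in $\mathcal{Q}$ lies in the zero generalized $\lambda$-eigenspace, so $f\in\SSS^{\mathrm{cl}}$ is classical; the same argument applied to the whole generalized $\A_{p}$-eigenspace of $f$ gives the more precise assertion.

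To prove $(\star)$: since $G$ is definite, $\SSS(\underline{k},r)=F(\C(\underline{k},r))$ is assembled from finitely many copies of the local-at-$p$ module $\C(\underline{k},r)$ and $\SSS^{\mathrm{cl}}$ from its finite-dimensional polynomial submodule $W_{\underline{k}}(L)^{\ast}\otimes\delta_{\underline{k}}$, so one is reduced to the module $\C(\underline{k},r)/(W_{\underline{k}}(L)^{\ast}\otimes\delta_{\underline{k}})$. For each simple root $\alpha_{i}$ ($1\leq i\leq n-1$) there is a ``partial $\theta$-operator'' $\Theta_{i}\colon\C(\underline{k},r)\to\C(s_{i}\cdot\underline{k},r)$ --- a differential operator on $\bar{N}_{0}$ of order $c_{i}:=\langle\underline{k}+\rho,\alpha_{i}^{\vee}\rangle=k_{i}-k_{i+1}+1$ with principal symbol $(\partial/\partial x_{\alpha_{i}})^{c_{i}}$, induced by the corresponding embedding of Verma modules and landing in the member of the family attached to the dot-translated weight $s_{i}\cdot\underline{k}$. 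Two facts enter. First, the system $\Theta_{1}f=\cdots=\Theta_{n-1}f=0$ defines a holonomic $D$-module on $\bar{N}_{0}$, so its space of $r$-analytic solutions is finite dimensional, and by the classical BGG resolution of $W_{\underline{k}}$ it coincides with the space of polynomial solutions, namely $\bigcap_{i}\ker\Theta_{i}=W_{\underline{k}}(L)^{\ast}\otimes\delta_{\underline{k}}$; consequently $(\Theta_{i})_{i}$ induces an injection $\mathcal{Q}\hookrightarrow\bigoplus_{i=1}^{n-1}\C(s_{i}\cdot\underline{k},r)$. Second, once the weights are matched up, $\Theta_{i}$ intertwines $u_{0}$ with $p^{c_{i}}$ times the action of $u_{0}$ on $\C(s_{i}\cdot\underline{k},r)$ in the normalization of Chenevier's family, the factor $p^{c_{i}}$ being the one produced by differentiating $c_{i}$ times against the $p$-scaling of the coordinate $x_{\alpha_{i}}$ induced by conjugation by $u_{0}=\mathrm{diag}(p^{n-1},\dots,p,1)$.

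It remains to estimate slopes. In Chenevier's normalization the contracting submonoid $U^{-}$, hence $u_{0}$, acts on every member $\C(\underline{\kappa},r')$ of the family by operators of operator-norm $\leq 1$, so all its finite-slope eigenvalues there have valuation $\geq 0$; after the twist above, the eigenvalues of $u_{0}$ on the $i$-th summand $\C(s_{i}\cdot\underline{k},r)$ thus have valuation $\geq c_{i}$. Since $c_{i}=k_{i}-k_{i+1}+1\geq 1+\min_{j}(k_{j}-k_{j+1})$, every generalized eigenvalue of $u_{0}$ on $\bigoplus_{i}\C(s_{i}\cdot\underline{k},r)$, hence on $\mathcal{Q}$, has valuation $\geq 1+\min_{j}(k_{j}-k_{j+1})$, which is exactly $(\star)$.

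The main obstacle is the second step, in particular the identity $\bigcap_{i}\ker\Theta_{i}=W_{\underline{k}}(L)^{\ast}\otimes\delta_{\underline{k}}$: one must know that completing from polynomial to $r$-analytic functions does not enlarge the common kernel of the first BGG differential (equivalently, that the classical BGG resolution of $W_{\underline{k}}$ survives analytification), and one must pin down the exact normalizing characters that make the $\Theta_{i}$ equivariant on the nose. For $n=2$ there is a single operator $\Theta_{1}=(\partial/\partial x_{\alpha_{1}})^{\,k_{1}-k_{2}+1}$, the kernel statement is immediate, and the whole argument specializes to Coleman's classicality criterion via the $\theta$-operator; this is also the model for the general case. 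Steps one and three are then essentially formal.
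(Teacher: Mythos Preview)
The paper does not actually give a proof of this proposition: it is stated with citations to \cite[Proposition 7.3.5]{Bellaiche.2009} and \cite[Proposition 4.7.4]{Chenevier.2004} and no argument is supplied. So there is nothing in the paper to compare your sketch against directly.

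That said, your sketch is precisely the strategy used in those references. The reduction to the slope bound $(\star)$ on the cokernel, the use of the first BGG differential (the operators $\Theta_i$ of order $c_i=k_i-k_{i+1}+1$ associated to the simple reflections) to embed the cokernel into $\bigoplus_i \C(s_i\cdot\underline{k},r)$, the slope shift by $c_i$ coming from the $p$-scaling of the root coordinate under conjugation by $u_0$, and the fact that $U^-$ acts with operator norm $\leq 1$ in Chenevier's normalization --- all of this is exactly how Chenevier's argument proceeds. You have also correctly identified the genuine content: the analytic BGG statement that $\bigcap_i \ker\Theta_i$ does not grow upon passing from polynomial to $r$-analytic functions. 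Your remark that the $n=2$ case collapses to Coleman's $\theta$-operator argument is accurate and is the historical model.

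One small point: your sketch asserts that passing to the generalized $\lambda$-eigenspace preserves exactness of the short exact sequence. This is fine here because $u_0$ acts compactly and the generalized eigenspace functor for a fixed nonzero eigenvalue is exact on the category of admissible $u_0$-modules in play, but it is worth saying explicitly why (e.g.\ via the Riesz projector attached to the slope-$\leq\nu$ factorization you mention).
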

 
\section{Endoscopic Transfer for automorphic forms on $U(n)$}

In this section we recall known (and expected) results about classical endoscopic transfer for automorphic forms on the definite unitary group $U(n)$.
 
\subsection{Admissible map of L-groups of $U(n)$}

Let $E$ be a CM extension of $\Q$ in $\bar{\mathbb{Q}}$. Let $Gal(E/\Q) = \{1, \sigma \}$. Let $\omega_{E/\Q}$ denote the character of order $2$ of idele class group $C_{\Q}$ associated to $E/\Q$ by CFT\footnote{Let $N_{E/\Q}: C_E \to C_{\Q}$ denotes the norm map, so its image is index $2$ subgroup of $C_{\Q}$. Then $\omega_{E/\Q}$ is the nontrivial character of $C_{\Q}$ which is trivial on the image $N_{E/\Q}(C_E)$.}.\\
The L-group of $U(n)$ is ${^L}{U(n)} = GL_n(\mathbb{C}) \rtimes W_{\Q}$, where the Weil group $W_{\Q}$ acts on $GL_{n}(\CC)$ by its quotient $Gal(E/\Q)$ by $\sigma M \sigma^{-1} := \phi_{n} (M^{-1})^{t} \phi_{n}^{-1}$, where $M \in GL_{n}(\CC)$ and $(\phi_{n})_{i,j} = (-1)^{n+i}\delta_{i,n+1-j}$.\\
The L-group of $U(n_1)\times \cdots \times U(n_{r})$ is ${^L}{(U(n_1) \times \dots \times U(n_r))} = (GL_{n_1}(\CC) \times \dots \times GL_{n_r}(\CC)) \rtimes W_{\Q}$, where the Weil group $W_{\Q}$ acts on $GL_{n_1}(\CC) \times \dots \times GL_{n_r}(\CC)$ by its quotient $Gal(E/\Q)$.\\
 
\noindent Let $n= n_1+\cdots+n_r$ be an unordered partion. Following Rogawski, we have an admissible map of L-groups \cite[Section 1.2]{Rogawski.1992} as below:
 
\begin{equation}\label{lembd}
 \xi:{^L}{(U(n_1) \times \dots \times U(n_r))} \to {^L}{U(n)}  
\end{equation}

\noindent which maps the neutral component of ${^L}{(U(n_1) \times \dots \times U(n_r))} = GL_{n_1} \times \dots \times GL_{n_r}$ to the subgroup of diagonal blocks of size $n_1,\dots,n_r$. We fix for once and all a character $\mu$ of the idele class group $C_E$ whose restriction to $C_{\Q}$ is $\omega_{E/\Q}$. We regard $\mu$ as a character of $W_E$ by means of isomorphism $W_{E}^{ab} \to C_E$. Set
\begin{equation}\label{mu}
 \mu_j = 
 \begin{cases}
 \mu & \text{if } n_j \equiv n-1 \text{ mod }2, \\
 1 & \text{otherwise}.
 \end{cases}
\end{equation}

\noindent and for $w \in W_E$, let 
\begin{equation*}
\xi(w) = \xi( \mu_{1}(w)I_{n_1},\dots,\mu_{r}(w) I_{n_r}) \times w.
\end{equation*}

\noindent To extend $\xi$ to $W_{\Q}$, it suffices to define $\xi(w_{\sigma})$, where $w_{\sigma} \in W_{\Q}$ is a fixed element whose projection to $Gal(E/F)$ is $\sigma$. We define
\begin{equation*}
\xi(w_{\sigma}) = \xi( \Phi_{n_1},\dots,\Phi_{n_r}) \Phi_{n}^{-1} \times w_{\sigma},
\end{equation*}
where $(\Phi_n)_{ij} = (-1)^{i-1} \delta_{i,n-j+1}$.\\

\noindent This map is a special case of endoscopic functorality as $U(n_{1}) \times \cdots \times U(n_{r})$ is not a Levi subgroup of $U(n)$ if $r>1$. Also the map $\xi$ depends on the choice of character $\mu$, it is unique if one fixes a character $\mu$. From now onwards, we fix a character $\mu$ once and for all.

\subsection{Endoscopic Transfer for unramified representation of $U(n)$}
 Endoscopic transfer is a special case of Langland's functoriality. If $n = n_1+ \cdots + n_r$, then $H =U(n_1) \times \cdots \times U(n_r)$ is an endoscopic subgroup of $U(n)$. The map $\xi$ defined as in (\ref{lembd}), defines the map of corresponding L-groups. Using $\xi$, we transfer parameters of $H$ to that of $U(n)$, we call it endoscopic transfer. \\
 
\noindent Let us now restrict to the unramified parameter. We start with an unramified parameter $\psi_l$ for $H$. So the parameter $\psi_l$  is trivial on the the inertia group, hence factor through $\Z$, and is determined by the image of Frobenius $\Phi$. We compose this with the map $\xi$, to get an unramified parameter for $U(n)$.
 \begin{equation}\label{unrendotrnsfr}
\xymatrix{
W_{\Q_{l}} \times SL_2(\CC) \ar[rr]^{\psi_{l}} \ar@{->>}[d] &  &{^L}{U(n_1) \times \cdots \times U(n_r)} \ar[rr]^{\xi} & & {^L}{U(n)} \\
W_{\Q_{l}}/ I_{\Q_{l}} \ar[r]^{\cong} & \Z \ar[ur]_{\psi_{l}} & & &
}
\end{equation}
\noindent Since unramified parameters are in bijection with unramified representations\cite[Proposition 1.12.1]{MR1265563}, we get endoscpoic transfer for unramified representation of $U(n)$.

\subsection{Global Endoscopic Transfer for $U(n)$}

Let $F$ be a number field and $L_F $ denotes the conjectural Langlands group\cite{Langlands.1979}.
\begin{lem}\cite[Lemma 2.2.1]{Rogawski.1992}
Let $\psi$ be a discrete parameter for $U(n)$, that is a morphism $\psi: L_{\Q} \times SL_{2}(\mathbb{C}) \to {^L}{U(n)}$.
Then $\psi_{E} : L_{E} \times SL_2(\mathbb{C}) \to GL_{m}(\mathbb{C})$ is a sum of pairwise nonisomorphic irreducible representation of ${\rho}_{j}$ such that ${\rho}_{j}^{\perp} \cong {\rho}_{j}$.
\end{lem}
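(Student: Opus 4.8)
The plan is to follow Rogawski, reading off the structure of $\psi_E$ from the assumption that $\psi$ is \emph{discrete}, i.e.\ that the centralizer $S_\psi = Z_{GL_n(\CC)}(\mathrm{Im}\,\psi)$ is finite (for $U(n)$ the subgroup $Z(GL_n(\CC))^{\mathrm{Gal}(E/\Q)}$ equals $\{\pm 1\}$, so finiteness of $S_\psi$ itself is the right condition). Fix a lift $w_\sigma \in L_\Q$ of $\sigma$ and let $\theta = \mathrm{Ad}(w_\sigma)$ be the resulting conjugation automorphism of $L_E$ (well-defined up to inner automorphisms, since $w_\sigma^2 \in L_E$); for a representation $\rho$ of $L_E \times SL_2(\CC)$ put $\rho^\perp(g) = {}^t\rho(\theta(g))^{-1}$, the conjugate-dual. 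Since $\theta^2$ is inner, $\rho \mapsto \rho^\perp$ is an involution on isomorphism classes and preserves irreducibility, so it permutes the irreducible constituents of any semisimple representation. Recall also that $\psi_E$ is the restriction of $\psi$ to $L_E \times SL_2(\CC)$ composed with the projection to the neutral component $GL_n(\CC)$, which makes sense because $L_E$ maps trivially to $\mathrm{Gal}(E/\Q)$.

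I would begin with two elementary observations. First, $\psi$ being an $L$-parameter, its image is semisimple, so $\psi_E = \bigoplus_i \rho_i^{\oplus m_i}$ with the $\rho_i$ pairwise non-isomorphic irreducible and $m_i \geq 1$. Second, writing $\psi(w_\sigma) = (A, w_\sigma) \in GL_n(\CC) \rtimes W_\Q$ and using the defining relation $\sigma M \sigma^{-1} = \phi_n (M^{-1})^t \phi_n^{-1}$ of ${}^L U(n)$, the identity $\psi(\theta(g)) = \psi(w_\sigma)\psi(g)\psi(w_\sigma)^{-1}$ for $g \in L_E \times SL_2(\CC)$ unwinds to $\psi_E(\theta(g)) = B\,{}^t\psi_E(g)^{-1}\,B^{-1}$ with $B = A\phi_n$; in other words $\psi_E^\perp \cong \psi_E$, and $B$ is an intertwiner realizing $\psi_E^\vee \cong \psi_E \circ \theta$. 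Consequently the permutation $i \mapsto i^*$ determined by $\rho_{i^*} \cong \rho_i^\perp$ is an involution with $m_{i^*} = m_i$.

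The main step is to compute $S_\psi$. An element of $GL_n(\CC)$ centralizes $\mathrm{Im}\,\psi$ precisely when it lies in $\mathrm{Aut}_{L_E \times SL_2}(\psi_E) \cong \prod_i GL_{m_i}(\CC)$ and also commutes with $(A,w_\sigma)$. The latter condition is governed by $B$: as an intertwiner $\psi_E^\vee \xrightarrow{\sim} \psi_E \circ \theta$, it restricts to a perfect pairing between the $\rho_i$-isotypic part of $\psi_E$ and the $\rho_{i^*}$-isotypic part. For $i \neq i^*$ this pairs $\CC^{m_i}$ with $\CC^{m_{i^*}}$, and the corresponding block of $S_\psi$ is a copy of $GL_{m_i}(\CC)$; for $i = i^*$ the pairing is the conjugate-self-duality form of $\rho_i$ tensored with a non-degenerate symmetric or alternating form on the multiplicity space $\CC^{m_i}$, and the corresponding block of $S_\psi$ is $O(m_i,\CC)$ or $Sp(m_i,\CC)$. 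Thus $S_\psi \cong \prod_{\{i,i^*\},\ i \neq i^*} GL_{m_i}(\CC) \times \prod_{i = i^*} G_i$ with $G_i \in \{O(m_i,\CC),\, Sp(m_i,\CC)\}$. Finiteness of $S_\psi$ now forces there to be no indices with $i \neq i^*$, since each contributes a positive-dimensional $GL_{m_i}(\CC)$; and for $i = i^*$ it forces $G_i$ finite, which excludes the alternating case (there is no non-degenerate alternating form on an odd-dimensional space, and $Sp(2k,\CC)$ is positive-dimensional for $k \geq 1$) and leaves only $G_i = O(1,\CC) = \{\pm 1\}$, i.e.\ $m_i = 1$. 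Hence $\psi_E = \bigoplus_i \rho_i$ is multiplicity-free and $\rho_i^\perp \cong \rho_i$ for every $i$, which is exactly the claim. One could alternatively bypass the centralizer and argue straight from the definition: a constituent that is not conjugate-self-dual, or occurs with multiplicity $\geq 2$, yields a pair of mutually dual isotropic $L_E$-subrepresentations and hence a proper Levi subgroup of ${}^L U(n)$ of $\mathrm{Res}_{E/\Q}GL$-type through which $\psi$ factors, contradicting discreteness.

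The step I expect to be the real obstacle is pinning down, in the third paragraph, the symmetry type of the form induced on each multiplicity space $\CC^{m_i}$ — in particular confirming that a constituent whose multiplicity-space form is alternating cannot survive the finiteness of $S_\psi$. This needs careful bookkeeping with the conjugate (twisted) duality inside the semidirect product ${}^L U(n)$, and with how the sign of the multiplicity-space form combines with the conjugate-self-dual type of $\rho_i$ itself. Everything else is formal.
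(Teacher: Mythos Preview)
The paper does not actually prove this lemma: it is simply quoted from Rogawski with a citation and no argument, so there is no ``paper's own proof'' to compare against. Your proposal is a faithful reconstruction of the standard argument (essentially Rogawski's, later systematized by Arthur and Mok for quasi-split classical groups): decompose the semisimple $\psi_E$ into isotypic pieces, use the semidirect-product relation in ${}^L U(n)$ to exhibit an intertwiner $\psi_E^\vee \cong \psi_E \circ \theta$, compute the centralizer $S_\psi$ block-by-block as a product of $GL_{m_i}$'s (for pairs $i \neq i^*$) and $O(m_i)$'s or $Sp(m_i)$'s (for $i = i^*$), and read off from finiteness of $S_\psi$ that there are no swapped pairs and every multiplicity is $1$ with orthogonal sign. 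This is correct, and the concern you flag about the sign bookkeeping on the multiplicity spaces is real but standard; the key point you already have is that any symplectic factor or any $GL_{m_i}$ with $m_i \geq 1$ is positive-dimensional, so finiteness alone kills them without needing to resolve which sign actually occurs for a given $\rho_i$.
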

\noindent This lemma allows us to define stable A-parameter.
 \begin{defn}
We call a discrete A-parameter $\psi$ stable if the base change to $\psi_{E}$ is irreducible.
An A-parameter which is not stable is called endoscopic.
\end{defn}

\noindent Let $ n = n_{1} + \dots + n_{r}$ be a partition  and for $j = 1,\dots, r $, let $\psi_{j}$ be a parameter of $U(n_j)$ of the form $\psi_{j}(\gamma \times h) = {\alpha}_{j}(\gamma \times h) \times w_{\gamma} $. We denote the product parameter $ \psi_{1} \times \dots \times \psi_{r}$ for $U(n_1)\times \dots \times U(n_r)$ by \\
\begin{equation*}
\gamma \times h \to ({\alpha}_{1}(\gamma \times h) \times \dots \times {\alpha}_{r}(\gamma \times h)) \times w_{\gamma}.
\end{equation*}
The equivalence class of $ \psi_{1} \times \dots \times \psi_{r}$ is independent of the ordering of $\psi_{j}$.\\

\noindent Following lemma describes endoscopic parameter for $U(n)$.

\begin{lem}\cite[Lemma 2.2.2]{Rogawski.1992}\label{etransApara}
Let $\psi$ be a discrete parameter for $U(n)$. Then there is a unique (unordered) partition $ n = n_{1} + \dots + n_{r}$ and distinct stable parameters $\psi_{j}$ for $U(n_j)$ such that $\psi = \xi \circ ( \psi_{1} \times \dots \times \psi_{r}).$ \\
Conversely, if  $ n = n_{1} + \dots + n_{r}$ and $\psi_{j}$ is a stable parameter for $U(n_j)$, then $\xi \circ ( \psi_{1} \times \dots \times \psi_{r})$ is a discrete parameter for $U(n)$ if and only if the $\psi_{j}$ are distinct.
Here $\xi$ is the map defined in \ref{lembd}.
\end{lem}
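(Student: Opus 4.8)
The plan is to pass to base change over $E$ and feed everything through the preceding lemma (Rogawski's Lemma 2.2.1). Restricting $\psi$ along $L_E\times SL_2(\CC)\hookrightarrow L_\Q\times SL_2(\CC)$ gives $\psi_E\colon L_E\times SL_2(\CC)\to GL_n(\CC)$, and the relation $\sigma M\sigma^{-1}=\phi_n\,{}^t(M^{-1})\,\phi_n^{-1}$ defining the $\sigma$-action on ${^L}U(n)$ forces $\psi_E$ to be conjugate-self-dual with respect to the form $\phi_n$, which is symmetric for $n$ odd and alternating for $n$ even; thus $\psi_E$ is conjugate-self-dual of sign $(-1)^{n-1}$. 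Conversely, to give $\psi$ is to give such a $\psi_E$ together with an extension datum (the value $\psi(w_\sigma)$), well-defined up to $GL_n(\CC)$-conjugacy; the possible data for a fixed $\psi_E$ form a torsor under $H^1(\mathrm{Gal}(E/\Q),Z(\psi_E))$, which is trivial when $\psi_E$ is irreducible (conjugation by $iI$ already toggles the sign of $\psi(w_\sigma)$). Finally, $\psi$ is discrete if and only if $\psi_E$ is multiplicity-free.

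For existence and the forward uniqueness statement I would apply the preceding lemma to write $\psi_E\cong\bigoplus_{j=1}^{r}\rho_j$ with the $\rho_j$ pairwise non-isomorphic, irreducible, and conjugate-self-dual. Put $n_j=\dim\rho_j$; then $n=n_1+\cdots+n_r$, and this partition, being the multiset of dimensions of the constituents of $\psi_E$, is forced. By Schur's lemma the $\phi_n$-form on $\bigoplus\rho_j$ can pair $\rho_j$ only with $\rho_k^{\perp}\cong\rho_k$, so it is block-diagonal and restricts to a non-degenerate conjugate-self-duality on each $\rho_j$, of the same sign $(-1)^{n-1}$ as the ambient form. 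Now $\mu$ is conjugate-self-dual of sign $-1$: one has $\mu^{\sigma}=\mu^{-1}$ (since $\mu(x)\mu(\sigma x)=\mu(N_{E/\Q}x)=\omega_{E/\Q}(N_{E/\Q}x)=1$) while $\mu|_{C_\Q}=\omega_{E/\Q}\neq 1$; hence twisting $\rho_j$ by $\mu_j$ as in (\ref{mu}) flips its sign exactly when $n_j\equiv n-1\pmod 2$, and a short parity check gives that $\mu_j^{-1}\otimes\rho_j$ then has sign $(-1)^{n_j-1}$ — precisely the sign under which a representation of dimension $n_j$ descends to a parameter of $U(n_j)$. So $\mu_j^{-1}\otimes\rho_j$ is the base change of a parameter $\psi_j$ of $U(n_j)$, stable because $\rho_j$ is irreducible, and (by the torsor remark) unique. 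Unwinding the definition of $\xi$ — block-diagonal embedding of $GL_{n_1}\times\cdots\times GL_{n_r}$, the $\mu_j$-twist on the $j$-th block, and the value $\xi(w_\sigma)=\xi(\Phi_{n_1},\dots,\Phi_{n_r})\Phi_n^{-1}\times w_\sigma$ — one sees that $\xi\circ(\psi_1\times\cdots\times\psi_r)$ base-changes to $\bigoplus_j\mu_j\otimes(\mu_j^{-1}\otimes\rho_j)=\psi_E$ and carries the correct extension datum, hence is isomorphic to $\psi$. Uniqueness of the partition and of the $\psi_j$ is then automatic, since all this data was read off uniquely from $\psi$.

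For the converse, given stable parameters $\psi_j$ for $U(n_j)$ with $n=\sum n_j$, I would set $\rho_j:=\mu_j\otimes(\psi_j)_E$, the $j$-th block of $(\xi\circ(\psi_1\times\cdots\times\psi_r))_E$; stability makes each $\rho_j$ irreducible, and the parity conventions in (\ref{mu}) make each $\rho_j$ conjugate-self-dual of sign $(-1)^{n-1}$, so $\bigoplus_j\rho_j$ is indeed the base change of a $U(n)$-parameter. If the $\psi_j$ are distinct then the $\rho_j$ are pairwise non-isomorphic — automatic when $n_j\neq n_k$ by dimension, and when $n_j=n_k$ one has $\mu_j=\mu_k$, so $\rho_j\cong\rho_k$ would force $(\psi_j)_E\cong(\psi_k)_E$ and hence (no ambiguity in descent) $\psi_j\cong\psi_k$ — whence $\bigoplus_j\rho_j$ is multiplicity-free and $\xi\circ(\psi_1\times\cdots\times\psi_r)$ is discrete. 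Conversely, if $\psi_j\cong\psi_k$ for some $j\neq k$ then $\rho_j\cong\rho_k$ occurs with multiplicity $\geq 2$, so the parameter is not multiplicity-free, hence not discrete.

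The hard part will be the sign-and-cocycle bookkeeping in the second paragraph: checking that the matrices $\phi_m$ and $\Phi_m$ built into ${^L}U(m)$ and into $\xi$, together with the twists $\mu_j$ forced by (\ref{mu}), conspire so that the block-diagonal assembly of the descended pieces lies in ${^L}U(n)$ itself and recovers $\psi$ exactly, rather than up to a stray quadratic twist. Concretely this is a computation in $H^1(\mathrm{Gal}(E/\Q),-)$ valued in the relevant dual groups and their centers, using $\mu\mu^{\sigma}=1$ and $\mu|_{C_\Q}=\omega_{E/\Q}$; everything else — the decomposition of $\psi_E$, the descent of an irreducible conjugate-self-dual representation, and the multiplicity-free criterion for discreteness — follows from the preceding lemma and from Schur's lemma. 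Since $L_\Q$ is only conjectural, all of this is carried out at the level of the $L$-group/parameter formalism; in concrete applications one replaces $L_E$-representations by the associated Galois representations, where the conjugate-self-duality, descent, and sign statements are theorems.
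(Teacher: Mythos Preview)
The paper does not prove this lemma at all: it is quoted verbatim from Rogawski \cite[Lemma 2.2.2]{Rogawski.1992} and used as a black box to describe the shape of endoscopic parameters. So there is no proof in the paper to compare your proposal against.

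That said, your sketch is correct and is essentially Rogawski's own argument. The decomposition of $\psi_E$ into pairwise non-isomorphic irreducible conjugate-self-dual pieces is exactly the content of the preceding lemma; your sign computation for $\phi_n$ (giving $\phi_n^{t}=(-1)^{n-1}\phi_n$), your identification of $\mu$ as conjugate-symplectic (via $\mu\mu^{\sigma}=1$ and $\mu|_{C_{\Q}}=\omega_{E/\Q}\neq 1$), and your parity check showing that the twist by $\mu_j$ in (\ref{mu}) corrects the sign of each block from $(-1)^{n-1}$ to $(-1)^{n_j-1}$ are all right and are precisely what makes the descent to $U(n_j)$ go through. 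The multiplicity-free criterion for discreteness and the torsor argument for uniqueness of descent in the irreducible case are likewise standard. Your caveat at the end is well placed: the only genuine labor is the cocycle/sign bookkeeping verifying that $\xi$ with its built-in $\Phi_m$'s and $\mu_j$'s reassembles the descended pieces into $\psi$ on the nose rather than up to a quadratic twist; this is a finite computation that Rogawski carries out.
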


\noindent In view of previous lemma, a discrete parameter of $U(n)$  can be uniquely as  $\psi = \xi \circ ( \psi_{1} \times \dots \times \psi_{r})$, where $\psi_{j}$ are distinct stable parameter for $U(n_{j})$. So a discrete parameter of $U(n)$ is endoscopic if $r>1$.\\

\noindent We make the following assumptions regarding $A$-packets of $U(n)$. 

\begin{itemize}
\item A packets are defined for $U(n)$.
\item $\Pi_{1}$, $\Pi_2$ discrete A-packets such that for almost all $v$, $\Pi_{1,v}$ and $\Pi_{2,v}$ contain the same unramified representations, then $\Pi_{1}$ and $\Pi_{2}$ coincide.
\end{itemize}
\begin{defn}\label{stableApacket}
Let $\Pi = \bigotimes_{v} \Pi_{v}$ be a discrete A-packet for $U(n)$. Then $\Pi$ is stable if there exists a discrete representation  $\pi = \bigotimes_{v} \pi_{v}$  of $GL_{n/E}$ such that for all $v$ for which $\Pi_{v}$ is unramified, the base change $(\Pi_{v})_{E}$ coincide with $\pi_{v}$(For unramified representation, base change map is well defined).
If $\pi$ is cuspidal, we call $\Pi$ cuspidal.
An A-packet which is not stable is called Endoscopic.
\end{defn}

\noindent Arthur's conjecture predicts existence of a natural correspondence which associates to every global discrete $A$-parameter of $U(n)$ (upto equivalence) an $A$-packet of $U(n)$, or the empty set. Under the conjecture stable(resp. endoscopic) $A$-parameter of $U(n)$ corresponds to a stable(resp. endoscopic) $A$-packet of $U(n)$. Predictions of Arthur's conjecture for $U(n)$ were verified by Rogawski when $n \leq 3$ \cite{MR1081540}. Main obstruction to prove the existence of global Endoscopic transfer for $U(n)$ and arthur conjecture for $U(n)$ for $n>3$ was the proof of fundamental lemma. Since by the work of Ng{\^o}, fundamental lemma is proved, a group of mathematicians are working on writing down the details of endoscopic transfer of unitary group and verification of Arthur's conjecture.\\
 
\noindent We will assume that the global endoscopic transfer for $U(n)$ exists, in fact one can assume that the global endoscopic transfer for $U(n)$ exists for almost all automorphic representations. This is expected to be true by the work of Closel, Harris, Labesse, Ng{\^o}, and many other mathematicians (see \cite{HarrisBook1}, volume 2 under preparation) .

\section{Maps between eigenvarieties of $U(n)$ interpolating endoscopic transfer}
 
Let $\E_{n}$, $\E_{n_{1}}$ and $\E_{n_{2}}$ denote the eigenvariety associated to $U(n)$, $U(n_{1})$ and $U(n_{2})$ respectively, where $n = n_{1}+ n_{2}$. In this section we will construct a map $f: \E_{n_{1}} \times \E_{n_{2}} \to \E_{n}$, which at classical points interpolates endoscopic transfer. But first we prove two simple lemmas regarding system of eigenvalues, which will be used during the construction.\\
 
\begin{lem}\label{sevlifting}

Let $M$ be a finite dimensional vector space over $\QQ_{p}$. Let $H \subset H^{\prime}$ be two commutative $\QQ_{p}$ algebras acting on $M$. Then any $H$ system of eigenvalue appearing in $M$, extends to a $H^{\prime}$ system of eigenvalue appearing in $M$.

\end{lem}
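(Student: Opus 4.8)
The plan is to reduce to a standard linear-algebra fact about commuting operators on a finite-dimensional vector space. Let $\lambda: H \to \QQ_p$ be a system of eigenvalues appearing in $M$, and let $M_\lambda \subset M$ be the associated generalized eigenspace, i.e.\ the set of $m \in M$ killed by a power of $\ker\lambda$ (equivalently, the largest $H$-stable subspace on which every $h \in H$ acts with the single generalized eigenvalue $\lambda(h)$). Since $M_\lambda \neq 0$, it suffices to produce a $\QQ_p$-point $\Lambda$ of the spectrum of the image of $H'$ in $\mathrm{End}_{\QQ_p}(M_\lambda)$; restricting $\Lambda$ to $H$ will then give $\lambda$ (because $H$ acts on $M_\lambda$ through $\lambda$ up to nilpotents, so any character of the image of $H$ on $M_\lambda$ must be $\lambda$), and $\Lambda$ will appear in $M_\lambda \subset M$ by construction.

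First I would observe that $M_\lambda$ is $H'$-stable: for $h' \in H'$, $h \in \ker\lambda$, and $m \in M_\lambda$, commutativity gives $h^N(h'm) = h'(h^N m) = 0$ for $N \gg 0$, so $h'm \in M_\lambda$. Thus $H'$ acts on the nonzero finite-dimensional $\QQ_p$-space $M_\lambda$ through a commutative quotient algebra $\bar{H'} := \mathrm{image}(H' \to \mathrm{End}_{\QQ_p}(M_\lambda))$. Next, $\bar{H'}$ is a finite-dimensional commutative $\QQ_p$-algebra, hence Artinian, so it is a finite product of local Artinian $\QQ_p$-algebras; picking any factor and composing with its residue map gives a $\QQ_p$-algebra homomorphism $\bar{H'} \to \QQ_p$ — here I use that $\QQ_p$ is algebraically closed, so every residue field of $\bar{H'}$ is $\QQ_p$ itself. (Concretely: choose a maximal ideal $\mathfrak{m}$ of $\bar{H'}$; then $\bar{H'}/\mathfrak{m} = \QQ_p$.) Pulling back along $H' \to \bar{H'}$ yields the desired $\Lambda: H' \to \QQ_p$. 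Finally, that $\Lambda$ genuinely \emph{appears} in $M_\lambda$ (not merely that it is a character of the algebra) follows because a maximal ideal of $\bar{H'}$ annihilates a nonzero subspace of $M_\lambda$: indeed, if every $\mathfrak{m}$-eigenvector vanished, then $\mathfrak{m}$ would act invertibly, contradicting $\mathfrak{m} \subsetneq \bar{H'}$ in the Artinian setting — more cleanly, decompose $M_\lambda = \bigoplus_{\mathfrak{m}} (M_\lambda)_{\mathfrak{m}}$ as a module over the product $\bar{H'} = \prod \bar{H'}_{\mathfrak{m}}$, and each summand is nonzero, carrying the corresponding generalized eigensystem.

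The only place any genuine care is needed — and it is hardly an obstacle — is checking compatibility of the restriction: one must confirm that $\Lambda|_H = \lambda$. This holds because $H$ maps into $\bar{H'}$, and on $M_\lambda$ every $h \in H$ acts as $\lambda(h)$ plus a nilpotent; hence the image of $h$ in any residue field $\bar{H'}/\mathfrak{m} = \QQ_p$ is exactly $\lambda(h)$, since nilpotents die in the quotient. So $\Lambda(h) = \lambda(h)$ for all $h \in H$, completing the argument. Everything here is elementary commutative algebra over the algebraically closed field $\QQ_p$; no input from the eigenvariety construction is required.
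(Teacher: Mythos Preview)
Your proof is correct and follows essentially the same strategy as the paper: show that the relevant $H$-eigenspace is $H'$-stable, then extract a common $H'$-eigenvector over the algebraically closed field $\QQ_p$. The only difference is that you work with the \emph{generalized} eigenspace $M_\lambda$ and appeal to the Artinian structure of the image algebra, whereas the paper uses the ordinary eigenspace $M[\chi]$ directly --- this makes the restriction compatibility $\Lambda|_H = \lambda$ immediate and bypasses the discussion of nilpotents, but the underlying idea is the same.
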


\begin{proof}
 Let $\chi$ be a $H$ system of eigenvalue appearing in $M$. Let $M[\chi]$ be the corresponding eigenspace.
 Let $m_1 \in M[\chi]$ , $ h \in H$, $ h^{\prime} \in H^{\prime}$, then 
 \begin{equation*}
 h.(h^{\prime}.m_1) = h^{\prime}. (h. m_1) = h^{\prime}. (\chi(h)m_1) = \chi(h) (h^{\prime}.m_1)
 \end{equation*}
 So $h$ acts on $h^{\prime}.m_1$ by $\chi(h)$. Hence $h^{\prime}.m_1 \in M[\chi]$. 
 So $M[\chi]$ is stable under the action of $H^{\prime}$. Since for any commutative algebra acting on a finite dimensional vector space over an algebraically closed field has a common eigenvector, we see there exists a $m \in M[\chi]$, a common eigenvector for the action of $H^{\prime}$ on $M[\chi]$. Hence $m$ is an eigenvector for the action of $H^{\prime}$ on $M$. So $\chi$ extends to a $H^{\prime}$ system of eigenvalue in $M$.\end{proof}

\noindent {\bf Note:} A system of $H^{\prime}$ eigenvalue which extends a given system of $H$ eigenvalue need not be unique.

\begin{lem}\label{sevtensor}
Let $H_1$, $H_2$ be two $\QQ_{p}$ algebras acting semisimply on a finite dimensional $\QQ_{p}$ vector space $M_1$ , $M_2$ via $\psi_1$, $\psi_2$ respectively. Then a system of $H_1 \otimes H_2$ eigenvalue appearing in $M_1 \otimes M_2$ is same as a system of $H_1$ eigenvalue appearing in $M_1$ and a system of $H_2$ eigenvalue appearing in $M_2$ composed via tensor product.
\end{lem}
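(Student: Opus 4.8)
The plan is to deduce the statement directly from the eigenspace decompositions of $M_1$ and $M_2$, which exist precisely because each $H_i$ acts semisimply and $\QQ_{p}$ is algebraically closed. First I would write
\[
M_1 = \bigoplus_{\chi_1} M_1[\chi_1], \qquad M_2 = \bigoplus_{\chi_2} M_2[\chi_2],
\]
where $\chi_i$ runs over the finitely many systems of $H_i$-eigenvalues appearing in $M_i$ and $M_i[\chi_i]$ denotes the corresponding eigenspace (an honest eigenspace, not merely a generalized one, by semisimplicity). Tensoring these two decompositions over $\QQ_{p}$ gives
\[
M_1 \otimes M_2 = \bigoplus_{\chi_1,\chi_2} M_1[\chi_1] \otimes M_2[\chi_2].
\]

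Second, I would observe that on the summand $M_1[\chi_1]\otimes M_2[\chi_2]$ the algebra $H_1\otimes H_2$ acts through the character $h_1\otimes h_2 \mapsto \chi_1(h_1)\chi_2(h_2)$, which I will denote $\chi_1\otimes\chi_2$: on a pure tensor, $(h_1\otimes h_2)(m_1\otimes m_2) = (h_1 m_1)\otimes(h_2 m_2) = \chi_1(h_1)\chi_2(h_2)\,(m_1\otimes m_2)$, hence the same holds on all of $M_1[\chi_1]\otimes M_2[\chi_2]$ by linearity. Thus the displayed decomposition exhibits $M_1\otimes M_2$ as a direct sum of $H_1\otimes H_2$-eigenspaces; in particular $H_1\otimes H_2$ acts semisimply on $M_1\otimes M_2$, and the systems of eigenvalues appearing in $M_1\otimes M_2$ are exactly the $\chi_1\otimes\chi_2$ with $\chi_i$ appearing in $M_i$.

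Finally, to match this up with the stated bijection I would check that $(\chi_1,\chi_2)\mapsto \chi_1\otimes\chi_2$ is injective: restricting $\chi_1\otimes\chi_2$ to the subalgebra $H_1\otimes 1\cong H_1$ recovers $\chi_1$, and restricting to $1\otimes H_2\cong H_2$ recovers $\chi_2$. So distinct pairs of appearing systems produce distinct appearing systems of $H_1\otimes H_2$, and conversely every appearing system of $H_1\otimes H_2$ arises from a unique such pair. The argument is essentially routine linear algebra; the only points that require any attention are the use of algebraic closedness of $\QQ_{p}$ to obtain the eigenspace decompositions and of commutativity of the Hecke algebras so that the language of "systems of eigenvalues" applies, and I do not anticipate any real obstacle.
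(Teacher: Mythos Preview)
Your proof is correct and follows essentially the same approach as the paper: both arguments use the semisimple eigenspace decompositions $M_i=\bigoplus M_i[\chi_i]$ and tensor them to see that $M_1\otimes M_2$ decomposes into $H_1\otimes H_2$-eigenspaces indexed by pairs $(\chi_1,\chi_2)$. Your presentation is slightly more streamlined (you read off both directions at once from the tensored decomposition, whereas the paper first checks the forward direction on a single eigenvector and then runs a dimension count), and you make explicit the injectivity of $(\chi_1,\chi_2)\mapsto\chi_1\otimes\chi_2$ via restriction to $H_1\otimes 1$ and $1\otimes H_2$, but the substance is the same.
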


\begin{proof}
Let $\chi_1$ be a system of $H_1$ eigenvalue for $M_1$ and $\chi_2$ be a system of $H_2$ eigenvalue for $M_2$, that is, there exists $v_1 \in M_1$, such that, $\chi_1 : H_1 \to \QQ_{p} , \chi_1(h_1)v_1 = \psi_1(h_1)v_1$ for all $h_1 \in H_1$ , and there exists $v_2 \in M_2$, such that, $\chi_2 : H_2 \to \QQ_{p} , \chi_2(h_2)v_2 = \psi_2(h_2)v_2$ for all $h_2 \in H_2$.\\
\noindent We want to show that, $\chi = \chi_1 \otimes \chi_2 : H_1 \otimes H_2 \to \QQ_{p}$ defined by
 $\chi(h_1 \otimes h_2) = \chi_1(h_1)\chi_2(h_2)$ is a system of eigenvalue for $H_{1} \otimes H_{2}$ appearing in $M_{1} \otimes M_{2}$.
\noindent Let us define $\psi = \psi_1 \otimes \psi_2$. Let $ h = \sum_{i}{\alpha_{i}(h_{1,i} \otimes h_{2,i}) }$ be an element of $H_1\otimes H_2$. Then we have,
 \begin{eqnarray*}
   \psi(h) (v_1 \otimes v_2) & = & \sum_{i} {\alpha_{i} \psi(h_{1,i} \otimes h_{2,i})(v_1 \otimes v_2)} \\
   & = & \sum_{i} {\alpha_{i} \psi_1(h_{1,i})v_1 \otimes \psi_2(h_{2,i})v_2}  \\
   & = & \sum_{i} {\alpha_{i} \chi_1(h_{1,i})v_1 \otimes \chi_2(h_{2,i})v_2}  \\
   & = & \chi(\sum_{i}{\alpha_{i}(h_{1,i} \otimes h_{2,i}) })(v_1 \otimes v_2)  \\ 
   & = & \chi(h)(v_1 \otimes v_2).
 \end{eqnarray*}
\noindent  Hence $\chi$ is a system of eigenvalue for $H_1 \otimes H_2$ appearing in $M_1 \otimes M_2$.\\
  \vskip 2mm  
\noindent Let $\chi$ be a system of eigenvalue for $H_1 \otimes H_2$ appearing in $M_1 \otimes M_2$. We want show that $\chi = \chi_{1} \otimes \chi_{2}$ for some $\chi_1$ (resp. $\chi_2$) a system of eigenvalue for $H_1$ (resp. $H_2$) appearing in $M_1$ (resp. $M_2$). Since $H_i$ acts semi-simply on $M_i$, we have,
  \begin{equation*}
  M_1 \cong \oplus_{i=1}^{r} M_1[\chi_{1,i}],
 \end{equation*}
   \begin{equation*}
  M_2 \cong \oplus_{j=1}^{s} M_2[\chi_{2,j}] .
 \end{equation*}
\noindent Let $ \mathrm{dim } M_1[\chi_{1,i}] = n_i$ generated by $\{ a_1, \cdots, a_{n_{i}} \}$, and $ \mathrm{dim } M_2[\chi_{2,j}] = m_j$ generated by $\{ b_1, \cdots, b_{m_{j}} \}$. Then we have,
 \begin{equation*}
\mathrm{dim } M_1[\chi_{1,i}] \otimes M_2[\chi_{2,j}] = n_i m_j = (\mathrm{dim } M_1[\chi_{1,i}])(\mathrm{dim } M_2[\chi_{2,j}]),
 \end{equation*}
 generated by $\{ a_k \otimes b_l \}_{k= 1,\cdots,n_i ; l=1,\cdots,m_j}$. By previous part, $\chi_{1,i}\otimes \chi_{2,j}$ is a system of $H_1 \otimes H_2$ eigenvalue for $M_1 \otimes M_2$ with $a_k \otimes b_l$ as an eigenvector. Hence we have, $M_1\otimes M_2 [\chi_{1,i} \otimes \chi_{2,j}] \supseteq M_1[\chi_{1,i}] \otimes M_2[\chi_{2,j}].$ We see that,
 \begin{eqnarray*} 
 dim M_1\otimes M_2 [\chi_{1,i} \otimes \chi_{2,j}] & \geq & dim M_1[\chi_{1,i}] \otimes M_2[\chi_{2,j}] ,\\
 & = & (dim M_1[\chi_{1,i}])(dim M_2[\chi_{2,j}]). 
 \end{eqnarray*}
 Summing over all $i$ and $j$, we get,
 \begin{eqnarray*} 
 \sum_{1=1}^{r}{ \sum_{j=1}^{s}{dim M_1\otimes M_2 [\chi_{1,i} \otimes \chi_{2,j}] }} & \geq & \sum_{1=1}^{r}{ \sum_{j=1}^{s}{(dim M_1[\chi_{1,i}])(dim M_2[\chi_{2,j}]) }}, \\
 & = & (dim M_1) (dim M_2), \\
 & = & dim (M_1  \otimes M_2).
\end{eqnarray*} 
Thus we have,
\begin{equation*}  
 M_1  \otimes M_2 \cong \oplus_{i,j} M_1  \otimes M_2 [\chi_{1,i} \otimes \chi_{2,j}].
\end{equation*} 
\noindent Hence any system of eigenvalue $\chi$ of $H_1 \otimes H_2$ in $M_1 \otimes M_2$ is of the form $\chi_{1,i} \otimes \chi_{2,j}$.
\end{proof}

\noindent A key ingredient to construct the map is a version of comparison theorem due to Chenevier\cite{MR2111512}, which we recall below.\\
\noindent Let us fix an eigenvariety data, a ring $\HH$ with a distinguished element $U_{p}$, $\W$ a reduced rigid space with an admissible covering $\mathfrak{C}$, and Banach modules $M_{W}$ with an action of $\HH$ for all $W \in \mathfrak{C}$, which satisfies compatibility criterion.

\begin{defn}\label{clstr}\cite[Definition II.5.4]{CourseBook}
A classical structure on eigenvariety data is the data of\\
(CSD1) a very Zariski dense subset $X \subset \W$,\\
(CSD2) for every $x \in X$, a finite dimensional $\HH$ module $M_{x}^{cl}$\\
such that\\
(CSC1) for every $x \in X$, there exists an $\HH$ equivariant injective map $M_{x}^{cl,ss} \hookrightarrow M_{x}^{fs,ss}$\\
(CSC2) for every $\nu \in \R$, let $X_{\nu}$ be the set of $x \in X$, such that there exists an $\HH$-equivariant isomorphism $M_{x}^{cl, \leq \nu} \cong M_{x}^{fs, \leq \nu}$, then for every $x \in X$, there exists a basis of neighborhoods of $x \in V$, such that $X_{\nu} \cap V$ is Zariski dense in $V$.
\end{defn}

\begin{thm}{Comparison Theorem(Chenevier)} \label{compthm}\cite[Theorem II.5.6]{CourseBook}
Suppose that we have two eigenvariety data with same $\HH,\W,\mathfrak{C}$, but different Banach modules $M_{W}$ and $M_{W}^{\prime}$. Let us call $\E$ and $\E^{\prime}$ the two eigenvarieties attached to those data. Assume that the two eigenvarieties are each provided with a classical structure with the same set $X$ (CSD1), but different $\HH$ modules $M_{x}^{cl}$ and $M_{x}^{\prime,cl}$. Suppose that for every $x \in X$, there exists an $\HH$-equivariant injective map
$$ M_{x}^{\prime,cl,ss} \hookrightarrow M_{x}^{cl,ss}.$$
Then there exists a unique closed embedding $\E^{\prime} \hookrightarrow \E$ of the eigenvarieties compatible with weight maps to $\W$ and with the maps $\HH \to \OO(\E)$ and $\HH \to \OO(\E^{\prime})$.
\end{thm}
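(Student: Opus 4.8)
The plan is to reduce the statement to a comparison, over affinoids of the weight space, of the finite Hecke algebras cutting out the two eigenvarieties, and then to check the resulting inclusion of ideals pointwise over the Zariski dense set of weights supplied by the classical structures.

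Recall how the local pieces are built. Fix a slope $\nu \in \R$ and an affinoid $W \in \mathfrak{C}$ adapted to $\nu$ for both Fredholm series (those of $U_{p}$ on $M_{W}$ and on $M_{W}^{\prime}$). Over $W$ one gets finite projective $A(W)$-modules $N := M_{W}^{\leq \nu}$ and $N' := M_{W}^{\prime, \leq \nu}$, each carrying an $A(W)$-linear action of $\HH$. Let $\T_{W}$ and $\T_{W}^{\prime}$ be the $A(W)$-subalgebras of $\mathrm{End}_{A(W)}(N)$ and $\mathrm{End}_{A(W)}(N')$ generated by the image of $\HH$; these are finite $A(W)$-algebras, and $\mathrm{Max}\, \T_{W}$ (resp.\ $\mathrm{Max}\, \T_{W}^{\prime}$) is the piece of $\E$ (resp.\ $\E'$) over $W$ cut out at slope $\leq \nu$. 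A closed immersion of the second piece into the first, over $W$ and compatible with the two maps from $\HH$, is the same thing as a surjection of $A(W)$-algebras $\T_{W} \twoheadrightarrow \T_{W}^{\prime}$ intertwining the maps from $\HH$; as both algebras are quotients of the single $A(W)$-algebra generated by the image of $\HH$, such a surjection exists if and only if every element of that algebra annihilating $N$ also annihilates $N'$.

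To verify this I would argue fibrewise. Since $A(W)$ is reduced and $N'$ is finite projective over it, an operator $h$ (coming from $\HH$) killing $N' \otimes_{A(W)} k(x)$ for all $x$ in a Zariski dense subset of $W$ already kills $N'$; and, using reducedness of the eigenvarieties, it is enough that $\chi(h) = 0$ for every $\HH$-eigensystem $\chi$ occurring in $N' \otimes_{A(W)} k(x)$ for such $x$. Take $x$ in the Zariski dense set $X_{\nu}$ attached to $\E'$ by (CSC2). At the level of semisimple $\HH$-modules one then has
\[ N' \otimes k(x) \ \sim\ M_{x}^{\prime, fs, \leq \nu} \ \cong\ M_{x}^{\prime, cl, \leq \nu} \ \hookrightarrow\ M_{x}^{cl, \leq \nu} \ \hookrightarrow\ M_{x}^{fs, \leq \nu} \ \sim\ N \otimes k(x), \]
the outer identifications being the standard compatibility of the eigenvariety machine between the fibre at $x$ of the slope-$\leq \nu$ submodule and the slope-$\leq \nu$ part of the finite-slope subspace, the first isomorphism the defining property of $X_{\nu}$ for $\E'$, and the two middle injections coming from the hypothesis $M_{x}^{\prime, cl, ss} \hookrightarrow M_{x}^{cl, ss}$ and from (CSC1) for $\E$ (both $\HH$-equivariant, hence compatible with the slope decomposition because $U_{p} \in \HH$). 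Thus every $\HH$-eigensystem of $N' \otimes k(x)$ occurs in $N \otimes k(x)$, so $h$ killing $N$ forces $\chi(h) = 0$ as needed. This yields the closed immersion over each $W$ at each $\nu$; these are canonical, hence compatible with the nestings in $\nu$ and on overlaps, and glue via Buzzard's machine into a closed immersion $\E' \hookrightarrow \E$ compatible with the weight maps to $\W$ and with the Hecke maps. For uniqueness, a morphism $\E' \to \E$ over $\W$ compatible with $\HH \to \OO(\E)$ and $\HH \to \OO(\E')$ is determined on functions, since locally $\OO$ of $\E$ is generated over $A(W)$ by the image of $\HH$; on points it must send a classical point of $\E'$ of weight $x$ and Hecke eigensystem $\psi$ to the unique point of $\E$ with the same weight and eigensystem. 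As classical points are Zariski dense in the reduced space $\E'$, this determines the morphism, and the one built above realises it.

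The step I expect to be the main obstacle is the fibrewise comparison. One must first arrange the covering of $\W$ by slope-adapted affinoids so that (CSC2) is usable in the stated form; and, more seriously, the hypothesis only controls the \emph{semisimplified} classical modules, so a priori it forces only that the two finite-slope fibres share the same Hecke eigensystems, not the same module structure. Bridging this last gap — getting $h$ to annihilate $N'$ itself rather than merely every eigensystem appearing in it — is precisely where reducedness of the eigenvarieties is indispensable, and it is the delicate point of the proof.
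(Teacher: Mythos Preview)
The paper does not prove this theorem. It is stated with a citation to Chenevier's course notes \cite[Theorem II.5.6]{CourseBook} and is used as a black box in the proof of the main theorem (Endoscopic Transfer on Eigenvariety for $U(n)$); no argument for it is given in the paper itself. There is therefore nothing in the paper to compare your proposal against.

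For what it is worth, your outline follows the expected strategy for such comparison results: pass to slope-$\leq \nu$ pieces over adapted affinoids, reduce the desired surjection of finite Hecke algebras to an inclusion of annihilators, verify that inclusion on a Zariski dense set of fibres using (CSC1), (CSC2) and the hypothesis on semisimplifications, and then invoke reducedness to pass from eigensystems back to the module. This is indeed the shape of Chenevier's original argument. The point you flag as delicate --- that the hypothesis only controls semisimplifications, so one must use reducedness of the target algebras to conclude --- is genuinely the crux; one typically argues that the image of $\HH$ in $\mathrm{End}_{A(W)}(N')$ is a quotient of the image in $\mathrm{End}_{A(W)}(N)$ by showing the kernel of the first surjection contains the kernel of the second after reduction modulo every maximal ideal in a Zariski dense set, and then appeals to the fact that these finite $A(W)$-algebras are torsion-free (or that the eigenvarieties are reduced). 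Your sketch is on the right track but, as you acknowledge, the passage from ``same eigensystems'' to ``inclusion of kernels'' needs to be made precise.
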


\noindent Let $u_{i} = diag(1,\cdots,1,p,1,\cdots,1) $, where $p$ occurs at $i$-th place. Let $F_{i} = \psi(u_{i})$, where $\psi: \HH \to \OO(\E_{n})$.  Let $\mathcal{Z}(n)$ be the subset of $x \in \E_{n}(\QQ_{p})$ such that,\\
(a) $\omega(x) = (k_{1}, k_{2},\cdots ,k_{n}) \in \Z^{n,-}$,\\
(b) $v(F_{1}(x)F_{2}(x) \cdots F_{n-1}(x)) < 1+ min_{i=1}^{n-1} \text{ }(k_{i} - k_{i+1})$, \\
(c)  if $\phi_{i} := F_{i}(x) p^{- k_{i} + i -1} $, then for all $i \neq j $, $\phi_{i}\phi_{j}^{-1} \neq p$.\\
Then $\mathcal{Z}(n)$ is a Zariski accumulation dense subset of $\E_{n}$ \cite[Theorem 7.3.1]{Bellaiche.2009}.
Let $\mathcal{Z}_{reg}(n)$ be the subset of $\mathcal{Z}(n)$, parameterizing the $p$-refined $(\pi,\RR)$ such that $\pi_{\infty}$ is regular, and such that the semisimple conjugacy class of $\pi_{p}$ has $n$ distinct eigenvalues. Then $\mathcal{Z}_{reg}(n)$ is a Zariski dense subset of $\E_{n}$ accumulating at each point of $\mathcal{Z}(n)$ \cite[Lemma 7.5.3]{Bellaiche.2009}.

\noindent Let $\kappa = (\kappa_{1},\cdots,\kappa_{n}): \E_{n} \to \mathbb{A}^n$ is the composition of the map $log_{p} \circ \omega$ by the affine change of co-ordinates $(x_{1},\cdots,x_{n}) \mapsto (-x_{1},-x_{2}+1,\cdots,-x_{n}+n-1)$. Then for $z=(\pi,\RR) \in Z_{reg}$, with $\omega(z) = (k_{1}, k_{2},\cdots ,k_{n})$, we have $\kappa_{i}(z) = -k_{i}+i-1$, and $\kappa_{1}(z),\cdots,\kappa_{n}(z)$ is the strictly increasing sequence of Hodge-Tate weights of $\rho_{\pi}$, where $\rho_{\pi}$ is the Galois representation associated to $\pi$ \cite[Definition 7.5.11]{Bellaiche.2009}. Moreover, we have \cite[Definition 7.2.13]{Bellaiche.2009},
 
 $$i_{p}i_{\infty}^{-1} (\RR|p|^{\frac{1-n}{2}} ) =  (F_{1}(z)p^{\kappa_{1}(z)},\cdots, F_{i}(z)p^{\kappa_{i}(z)}, \cdots, F_{n}(z)p^{\kappa_{n}(z)}).$$

\noindent Let $z_{1} \in \mathcal{Z}_{reg}(n_{1})$ be any point with $\omega(z_{1})= (k_{1},\cdots,k_{n_{1}}) \in \Z^{n_{1},-}$, and $z_{2} \in \mathcal{Z}_{reg}(n_{2})$ be any point with $\omega(z_{2})= (k_{1}^{\prime},\cdots,k_{n_{2}}^{\prime}) \in \Z^{n_{2},-}$. We call the pair $(z_{1},z_{2}) \in \mathcal{Z}_{reg}(n_{1}) \times \mathcal{Z}_{reg}(n_{2})$ a ``nice" classical point if we have $k_{n_{1}} > k_{1}^{\prime}$.

\begin{thm}{{\bf Endoscopic Transfer on Eigenvariety for $U(n)$}}
There exists a unique map of eigenvarieties 
$$f: \E_{n_{1}} \times \E_{n_{2}} \to \E_{n},$$
where $n = n_{1}+ n_{2}$, such that a ``nice" classical point of $\E_{n_{1}} \times \E_{n_{2}}$, $((\pi_{1}, \RR_{1}),(\pi_{2},\RR_{2}))$ maps to a classical point of $\E_{n}$, $(\pi,\RR)$, where $\pi$ is the endoscopic transfer of $\pi_1$ and $\pi_2$ and $\RR = (\mu_{1}(Frob_{p})\RR_{1}, \mu_{2}(Frob_{p})\RR_{2})$, here $\mu_1$ and $\mu_2$ as in (\ref{mu}).
\end{thm}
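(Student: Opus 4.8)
The plan is to deduce the theorem from Chenevier's Comparison Theorem (Theorem \ref{compthm}): I will realise $\E_{n_1} \times \E_{n_2}$ as the eigenvariety attached to an eigenvariety datum whose weight space and Hecke algebra coincide with those used for $\E_n$, and then match the injection of classical modules demanded by that theorem with classical endoscopic transfer. First I set up the combinatorial dictionary. Concatenation of characters of the diagonal tori gives a canonical isomorphism $\W_{n_1} \times \W_{n_2} \cong \W_n$, under which $\Z^{n_1,-} \times \Z^{n_2,-}$ corresponds \emph{exactly} to $\Z^{n,-}$: the inequality $k_1 > \cdots > k_n$ is equivalent to $(k_1 > \cdots > k_{n_1})$, $(k_{n_1+1} > \cdots > k_n)$ together with the ``nice'' condition $k_{n_1} > k_{n_1+1}$, so the weight of an endoscopic transfer lies in $\Z^{n,-}$ precisely for nice pairs, and every weight in $\Z^{n,-}$ arises this way. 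Likewise $U^{(n)} \cong U^{(n_1)} \times U^{(n_2)}$, with $U^{-,(n)}$ landing inside $U^{-,(n_1)} \times U^{-,(n_2)}$, giving a ring map $\A_p^{-,(n)} \to \A_p^{-,(n_1)} \otimes \A_p^{-,(n_2)}$; and the Satake dual of the block-diagonal embedding $GL_{n_1} \times GL_{n_2} \hookrightarrow GL_n$ gives $\HH^{ur}_n \to \HH^{ur}_{n_1} \otimes \HH^{ur}_{n_2}$, which on Satake parameters is exactly ``direct sum''. Twisting the first map by the character of $U$ that measures the discrepancy between $\delta_B^{1/2}$ for $GL_n$ and the half-sum of roots for the $(n_1,n_2)$-Levi, and by the characters $\mu_1, \mu_2$ of (\ref{mu}) evaluated at $p$ (which is split in $E$, since $G(\Q_p) \cong GL_n(\Q_p)$), I assemble a ring homomorphism $t_\xi \colon \HH_n \to \HH_{n_1} \otimes \HH_{n_2}$ realising $\xi$ on all Hecke data.

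Through $t_\xi$ the algebra $\HH_n$ acts on the completed tensor products $\SSS_{n_1}(V_1,r) \,\widehat{\otimes}\, \SSS_{n_2}(V_2,r)$, which are orthonormalisable Banach modules with property (Pr); the image of $u_0^{(n)}$ lies in $U^{-,(n_1)} \times U^{-,(n_2)}$ and acts as a twist of $u_0^{(n_1)} \otimes u_0^{(n_2)}$, hence compactly. Buzzard's machine then produces a reduced eigenvariety $\E'$ over $\W_n$, equipped with $\HH_n \to \OO(\E')$ and a weight map to $\W_n$. Lemma \ref{sevtensor}, applied to the finite-dimensional slope pieces, shows the $\HH_n$-eigensystems occurring in $\E'$ are the $t_\xi$-pullbacks of tensor products of eigensystems of $\E_{n_1}$ and $\E_{n_2}$; combined with Rogawski's uniqueness of the endoscopic decomposition (Lemma \ref{etransApara}) and the fact that a refinement remembers the block ordering, these eigensystems separate the points of $\E_{n_1} \times \E_{n_2}$, so $\E' \cong \E_{n_1} \times \E_{n_2}$ canonically, with structure map $\HH_n \xrightarrow{t_\xi} \HH_{n_1} \otimes \HH_{n_2} \to \OO(\E_{n_1} \times \E_{n_2})$.

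Next I install the two classical structures over the common very Zariski dense set $X = \Z^{n,-} \subset \W_n$. On $\E_n$ this is Chenevier's classical structure, $M_x^{cl} = F(W_{\underline{k}}(L)^{\ast}) \otimes \delta_{\underline{k}}$, for which (CSC1)--(CSC2) are exactly Proposition \ref{chencontrolthm}. On $\E'$, writing $\underline{k} = (\underline{k_1}, \underline{k_2})$ with $\underline{k_i} \in \Z^{n_i,-}$, I take $M_x^{\prime, cl} = (F(W_{\underline{k_1}}(L)^{\ast}) \otimes \delta_{\underline{k_1}}) \otimes (F(W_{\underline{k_2}}(L)^{\ast}) \otimes \delta_{\underline{k_2}})$ with $\HH_n$ acting through $t_\xi$; (CSC1)--(CSC2) follow from Proposition \ref{chencontrolthm} for $U(n_1)$ and $U(n_2)$, using that slopes for $u_0^{(n)}$ are additive across the two factors. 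The decisive step is then the last hypothesis of Theorem \ref{compthm}: an $\HH_n$-equivariant injection $M_x^{\prime,cl,ss} \hookrightarrow M_x^{cl,ss}$. By Lemma \ref{sevtensor} the eigensystems in $M_x^{\prime,cl}$ are the $t_\xi$-pullbacks of $\psi_{(\pi_1,\RR_1)} \otimes \psi_{(\pi_2,\RR_2)}$ for $p$-refined $(\pi_i,\RR_i)$ of weight $\underline{k_i}$; classical endoscopic transfer (assumed in Section 3) attaches to $(\pi_1,\pi_2)$ an automorphic $\pi$ of $U(n)$ of weight $\underline{k}$ with $\rho_\pi = \rho_{\pi_1} \oplus \rho_{\pi_2}$, hence with matching unramified Hecke eigenvalues; since local transfer at the split place $p$ is normalised parabolic induction, $\RR = (\mu_1(Frob_p)\RR_1, \mu_2(Frob_p)\RR_2)$ is an accessible refinement of $\pi_p$, and unwinding (\ref{psip}) with the normalisations fixed above gives $t_\xi^{\ast}\big(\psi_{(\pi_1,\RR_1)} \otimes \psi_{(\pi_2,\RR_2)}\big) = \psi_{(\pi,\RR)}$, which occurs in $M_x^{cl}$. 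Distinct inputs produce distinct outputs because $(\rho_\pi, \RR)$ recovers both blocks $(\rho_{\pi_i}, \RR_i)$ and hence $(\pi_i,\RR_i)$ (Lemma \ref{etransApara} plus multiplicity one for definite $U(n_i)$), and on each isotypic component the multiplicities agree (both governed by dimensions of Iwahori invariants). Theorem \ref{compthm} now yields a unique closed embedding $\E' \hookrightarrow \E_n$ compatible with weight maps and Hecke actions; composing with $\E_{n_1} \times \E_{n_2} \cong \E'$ gives $f$, which by construction sends a nice classical point $((\pi_1,\RR_1),(\pi_2,\RR_2))$ to $(\pi,\RR)$. Uniqueness is immediate: nice classical points are Zariski dense in the reduced space $\E_{n_1} \times \E_{n_2}$ (they contain a Zariski-dense subset of $\mathcal{Z}_{reg}(n_1) \times \mathcal{Z}_{reg}(n_2)$), so two such maps agreeing on them coincide.

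The main obstacle I anticipate lies in the precise definition of $t_\xi$: one must carefully bookkeep the $\mu_j$-twists coming from $\xi$, the difference between the half-sums of roots for $GL_n$ and for the $(n_1,n_2)$-Levi (a character of $U$ valued in $p^{\Z}$, which produces the $p$-power shifts in the refinement), and the $\delta_{\underline{k}}$ conventions, so that the $u_i$-eigenvalues transfer to exactly $\RR = (\mu_1(Frob_p)\RR_1, \mu_2(Frob_p)\RR_2)$; I would also need to check that accessibility of the refinement is preserved, via the geometric lemma for Jacquet modules of parabolic inductions. A second delicate point is that endoscopic transfer for $U(n)$ is currently available only for (at least) almost all automorphic representations; if necessary one replaces $M_x^{\prime,cl}$ by the $\HH_n$-submodule spanned by the pairs whose transfer is established and checks that (CSC2) still holds, a density bookkeeping I expect to go through. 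The remaining checks --- property (Pr) and orthonormalisability of the tensor modules, compactness of $u_0^{(n)}$, reducedness of $\E'$ --- are routine.
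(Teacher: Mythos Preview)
Your overall strategy matches the paper's: build an auxiliary eigenvariety $\E'$ from the tensor-product Banach modules with $\HH$ acting through the homomorphism $t_\xi$ (the paper calls it $G\colon \HH \hookrightarrow \HH_1 \otimes \HH_2$, defined by exactly the twists you anticipate in (\ref{mapatl}) and (\ref{mapatp})), equip both $\E_n$ and $\E'$ with classical structures over $X = \Z^{n,-}$ via Proposition \ref{chencontrolthm}, use Lemmas \ref{sevlifting} and \ref{sevtensor} together with classical endoscopic transfer to see that every $\HH$-eigensystem in $(M_{\underline k}')^{cl,ss}$ is of the form $\psi_{(\pi,\RR)}$ and hence occurs in $(M_{\underline k})^{cl,ss}$, and apply Theorem \ref{compthm} to obtain $\beta \colon \E' \hookrightarrow \E_n$.

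Two points of divergence are worth flagging. First, you assert $\E' \cong \E_{n_1} \times \E_{n_2}$, arguing that $\HH$-eigensystems separate points. The paper does not claim this and does not need it: it only uses that the inclusion $\HH \hookrightarrow \HH_1 \otimes \HH_2$ yields a \emph{surjection} $\alpha \colon \E_{n_1} \times \E_{n_2} \to \E'$, and sets $f = \beta \circ \alpha$. Your justification for injectivity (Lemma \ref{etransApara} and ``the refinement remembers the block ordering'') applies only to global classical parameters; at an arbitrary rigid-analytic point the $\HH_l$-eigenvalue records only the unordered $n$-element multiset of Satake parameters, not its $(n_1,n_2)$-splitting, and although the $\A_p$-part is a genuine isomorphism this alone need not force the splitting at every $l$. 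The isomorphism claim is therefore not adequately justified, but since the surjection already suffices to define $f$, you can simply drop it.

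Second, your uniqueness argument is shorter than the paper's. Because the statement prescribes the target refinement $\RR$ explicitly, two candidate maps agree on the Zariski-dense set of nice classical points of the reduced source, and your density argument is enough. The paper runs a finer argument: it allows a priori different refinements $\RR', \RR''$ at nice classical points, writes $\RR'' = \sigma_0(\RR')$ for some fixed $\sigma_0 \in S_n$ on a dense piece, and shows that the resulting identity $F_i \circ \phi'' = (F_{\sigma_0(i)} \circ \phi') \cdot p^{\kappa_{\sigma_0(i)} - \kappa_i}$ cannot hold between rigid-analytic functions unless $\sigma_0 = \mathrm{id}$. This extra step is really proving that the particular refinement $\RR = (\mu_1(Frob_p)\RR_1,\mu_2(Frob_p)\RR_2)$ is forced by analyticity rather than imposed by hand; for the theorem as literally stated your argument suffices, but the paper's buys the stronger conclusion that no other choice of refinement could interpolate.
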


\begin{proof}
({\bf Uniqueness})
Let $\phi^{\prime}, \phi^{\prime \prime}: \E_{n_{1}} \times \E_{n_{2}} \to \E_{n}$ be two such maps interpolating endoscopic transfer at ``nice" classical points. Let $z \in  \E_{n_{1}} \times \E_{n_{2}} $ be a ``nice" classical point, that is $z = ((\pi_{1},\RR_{1} ),(\pi_{2},\RR_{2}))$, where $\pi_{i}$ is a $p$-refined automorphic form for $U(n_{i})$ of weight $\underline{k_{i}}$, where $\underline{k_{1}} =(k_{1}, \cdots, k_{n_{1}})$ and $\underline{k_{2}} =(k_{n_{1}+1}, \cdots, k_{n})$ with $k_{1}>k_{2}> \cdots > k_{n}$. Then $\pi$, the endoscopic transfer of $\pi_{1}$ and $\pi_{2}$  has weight $\underline{k} = (k_{1},k_{2}, \cdots , k_{n})$. Suppose $\phi^{\prime} (z) = (\pi, \RR^{\prime})$ and $\phi^{\prime \prime}(z) = (\pi, \RR^{\prime\prime})$, where $\RR^{\prime},\RR^{\prime\prime}$ are any two accessible refinements of $\pi_{p}$. Let $\RR^{\prime\prime} = \sigma_{z}(\RR^{\prime})$, where $\sigma_{z} \in S_{n}$, as $\RR^{\prime\prime}$ is an ordering of $\RR^{\prime}$. The set of ``nice" classical points can be written as,
$$Z = \bigsqcup_{\sigma \in S_{n}}\{ z \in Z | \sigma_{z}= \sigma\} := \bigsqcup_{\sigma \in S_{n}} Z_{\sigma}.$$
\noindent Since $Z$ is dense, at least one of the $Z_{\sigma}$ is dense. Suppose $Z_{\sigma_{0}}$ is dense, we will use $z \in Z_{\sigma_{0}}$, hence $\sigma_{0}(\RR^{\prime}) = \RR^{\prime\prime}$, for all $z \in Z_{\sigma_{0}}$.\\
\noindent Note that,
 \begin{equation*}
 i_{p}i_{\infty}^{-1} (\RR^{\prime}|p|^{\frac{1-n}{2}} ) =  (F_{1}(\phi^{\prime}(z))p^{\kappa_{1}(\phi^{\prime}(z))},\cdots, F_{i}(\phi^{\prime}(z))p^{\kappa_{i}(\phi^{\prime}(z))}, \cdots, F_{n}(\phi^{\prime}(z))p^{\kappa_{n}(\phi^{\prime}(z))}),
 \end{equation*}
and
 \begin{equation*}
 i_{p}i_{\infty}^{-1} (\RR^{\prime\prime}|p|^{\frac{1-n}{2}} ) =  (F_{1}(\phi^{\prime\prime}(z))p^{\kappa_{1}(\phi^{\prime\prime}(z))},\cdots, F_{i}(\phi^{\prime\prime}(z))p^{\kappa_{i}(\phi^{\prime\prime}(z))}, \cdots, F_{n}(\phi^{\prime\prime}(z))p^{\kappa_{n}(\phi^{\prime\prime}(z))}).
 \end{equation*}

\noindent Thus we have, 
$$F_{\sigma_{0}(i)} (\phi^{\prime}(z)) p ^{\kappa_{\sigma_{0}(i)}(\phi^{\prime}(z))} = F_{i}(\phi^{\prime \prime}(z)) p^{\kappa_{i}(\phi^{\prime \prime}(z))},$$
and hence we have, 
$$ F_{i}(\phi^{\prime \prime}(z)) = F_{\sigma_{0}(i)}(\phi^{\prime}(z)) p^{\kappa_{\sigma_{0}(i)}(\phi^{\prime}(z))- \kappa_{i}(\phi^{\prime \prime}(z))}.$$
Since $\kappa_{i}(\phi^{\prime}(z))= \kappa_{i}(\phi^{\prime \prime}(z))$ for all $i$, as $\kappa_{i}$ is the strictly increasing sequence of Hodge-Tate weights of the Galois representation $\rho_{\pi}$, it is independent of the choice of the refinement. We have,
$$ F_{i}(\phi^{\prime \prime}(z)) = F_{\sigma_{0}(i)}(\phi^{\prime}(z)) p^{\kappa_{\sigma_{0}(i)}(\phi^{\prime}(z))- \kappa_{i}(\phi^{\prime}(z))},$$ 
and hence,
$$  F_{i}(\phi^{\prime \prime}(z)) = F_{\sigma_{0}(i)}(\phi^{\prime}(z)) p^{(\kappa_{\sigma_{0}(i)}- \kappa_{i})(\phi^{\prime}(z))}.$$
The left hand side of the equation is an analytic function, but the right hand side is not analytic unless $\sigma_{0}(i) = i$ for all $i$. Thus $\RR^{\prime} = \RR^{\prime\prime}$. Hence $\phi^{\prime}(z) = \phi^{\prime \prime}(z)$ for all $z \in Z_{\sigma_{0}}$. Since $Z_{\sigma_{0}}$ is dense, we get $\phi^{\prime} = \phi^{\prime \prime}$.\\

\noindent {\bf Existance:}
Let $\HH_{1}$, $\HH_2$ and $\HH$ be Hecke algebras associated to $U(n_1)$, $U(n_2)$ and $U(n)$ respectively. We have, $\HH_{1} \cong \otimes_{l \in S_{0}} \HH_{1,l} \otimes \A_{p,1}$, where $\HH_{1,l} \cong \QQ_{p}[X_{1,l},\cdots,X_{n_{1},l},X_{1,l}^{-1},\cdots,X_{n_{1},l}^{-1}]^{S_{n_{1}}}$ and $\A_{p,1} \cong \QQ_p[X_1,\cdots,X_{n_{1}},X_{1}^{-1},\cdots,X_{n_{1}}^{-1}] $. Similarly, we have, $\HH_{2} \cong \otimes_{l \in S_{0}} \HH_{2,l} \otimes \A_{p,2}$, where $\HH_{2,l} \cong \QQ_{p}[Y_{1,l},\cdots,Y_{n_{2},l},Y_{1,l}^{-1},\cdots,Y_{n_{2},l}^{-1}]^{S_{n_{2}}}$ and $\A_{p,2} \cong \QQ_p[Y_1,\cdots,Y_{n_{2}},Y_{1}^{-1},\cdots,Y_{n_{2}}^{-1}] $ and $\HH \cong \otimes_{l \in S_{0}} \HH_{l} \otimes \A_{p}$, where $\HH_{l} \cong \QQ_{p}[T_{1,l},\cdots,T_{n,l},T_{1,l}^{-1},\cdots,T_{n,l}^{-1}]^{S_{n}}$ and $\A_{p} \cong \QQ_p[T_1,\cdots,T_{n},T_{1}^{-1},\cdots,T_{n}^{-1}] $.\\

\noindent We have an injection of $\HH_{l}$ into $ \HH_{1,l} \otimes \HH_{2,l}$ given by, 
\begin{equation}\label{mapatl}
\theta: \HH_{l} \hookrightarrow \HH_{1,l} \otimes \HH_{2,l}
\end{equation}

\begin{equation*}
\theta(T_{i,l}) = 
 \begin{cases}
 \mu_{1}(Frob_{l})X_{i,l} & \text{if } 1 \leq i \leq n_{1}, \\
 \mu_{2}(Frob_{l})Y_{i- n_{1},l} & \text{if } n_{1} < i \leq n,
 \end{cases}
\end{equation*}
where $\mu_{i}$ as in (\ref{mu}), and $Frob_{l}$ is the frobenius. \\

\noindent Also we have an isomorphism of Atkin-Lehner algebra given by, 
\begin{equation}\label{mapatp}
\A_p \cong \A_{p,1} \otimes \A_{p,2}
\end{equation}
\begin{equation*}
T_i \mapsto
\begin{cases}
\mu_1(Frob_{p})p^{\frac{n_{2}}{2}} X_i & \text{if } 1 \leq i \leq n_{1}, \\
 \mu_{2}(Frob_{p}) p^{\frac{-n_{1}}{2}} Y_{i- n_{1}} & \text{if } n_{1} < i \leq n.
\end{cases}
\end{equation*}
Combining these two maps, we get an injection $G: \HH \hookrightarrow \HH_{1} \otimes \HH_{2}$.\\
Notice that $\W_{n_{1}} \times \W_{n_{2}} \cong \W_{n}$. Let $W \subset \W_n$ be an affinoid open such that $W \cong W_1 \times W_2$, and $W_{i} \subset \W_{n_{i}}$ is an affinoid open. Let $R$ be the affinoid algebra so that $ Spec(R) = W $. Let $M_{W} =\SSS(W,r)$ and $M_{W}^{\prime} = M_{W_1} \otimes M_{W_2}=\SSS(W_{1},r) \otimes \SSS(W_{2},r)$, with $r \geq max(r_{W},r_{W_{1}},r_{W_{2}})$. Then $M_{W}$ is a $\HH$ module and $M_{W}^{\prime}$ is a $\HH_{1} \otimes \HH_{2}$ module and hence a $\HH$ module. Using $(R,M_{W},\HH)$, we construct the local piece of eigenvariety $\E_{n}$, using $(R,M_{W}^{\prime},\HH_{1} \otimes \HH_{2})$ we construct local piece of eigenvariety $ \E_{n_{1}} \times \E_{n_{2}}$. Let us denote by $\E^{\prime}$ the eigenvariety whose local pieces are constructed from the data $(R, M_{W}^{\prime}, \HH)$. Since $\HH \hookrightarrow \HH_{1} \otimes \HH_{2}$, there is a surjective map $ \alpha: \E_{n_{1}} \times \E_{n_{2}} \to \E^{\prime}$. So to construct a map $f: \E_{n_{1}} \times \E_{n_{2}} \to \E_{n}$, we need to construct a map $\beta : \E^{\prime} \to \E_{n}$.\\

\noindent To construct the map $\beta$, we use the comparison theorem due to Chenevier (\ref{compthm}). We first need to define a classical structure on $\E$ and $\E^{\prime}$. For $x \in \W(L)$, we shall denote by $(M_{x})^{fs}$ (resp. $(M_{x}^{\prime})^{fs}$), the finite slope part of the fiber at $x$ of the eigenvariety data used to construct $\E$ (resp. $\E^{\prime}$), base changed to $\QQ_{p}$. So if we have $x = \underline{k}=(k_{1},\cdots,k_{n}) \in \Z^{n,-}$, then, $(M_{\underline{k}})^{fs} = \SSS(\underline{k},r)^{fs} \otimes \QQ_{p}$ and $(M_{\underline{k}}^{\prime})^{fs} = (\SSS(\underline{k_{1}},r)^{fs} \otimes \QQ_{p}) \otimes (\SSS(\underline{k_{2}},r)^{fs} \otimes \QQ_{p})$, where $\underline{k_{1}} = (k_{1},\cdots,k_{n_{1}})$ and $\underline{k_{2}} = (k_{n_{1}+1},\cdots,k_{n})$. In both cases take $X=\Z^{n,-} \subset \W$ (CSD1). For $x = \underline{k}= (\underline{k_{1}},\underline{k_{2}})=(k_{1},\cdots,k_{n}) \in \Z^{n,-}$, define $(M_{\underline{k}})^{cl} = (F(W_{\underline{k}}(L)^{\ast})^{fs} \otimes \delta_{\underline{k}}) \otimes \QQ_{p}$, $(M_{\underline{k}}^{\prime})^{cl} = ((F(W_{\underline{k_{1}}}(L)^{\ast})^{fs}\otimes \delta_{\underline{k_{1}}})  \otimes \QQ_{p}) \otimes ((F(W_{\underline{k_{2}}}(L)^{\ast})^{fs} \otimes \delta_{\underline{k_{2}}})  \otimes \QQ_{p})$. Since we have natural $\HH$-equivariant embedding $F(W_{\underline{k}}(L) \otimes \delta_{\underline{k}} \hookrightarrow \SSS(\underline{k},r)^{fs}$, we have as $\HH$ module $(M_{\underline{k}})^{cl}$ (resp. $(M_{\underline{k}}^{\prime})^{cl}$), are submodule of $(M_{\underline{k}})^{fs}$ (resp. $(M_{\underline{k}}^{\prime})^{fs}$), hence condition (CSC1) is satisfied. For condition (CSC2), fix $\nu \in \R$. Define, 
$$X_{\nu} = \{ \underline{k} \in \Z^{n,-} | k_{i} - k_{i+1} + 1 > \nu \text { for all } i = 1,\cdots, (n-1) \}.$$
Thus by Chenevier's control theorem (\ref{chencontrolthm}) we have, for any $\underline{k} \in X_{\nu}$, $(M_{\underline{k}})^{cl, \leq \nu} \simeq (M_{\underline{k}})^{fs,\leq \nu} $ and $(M_{\underline{k}}^{\prime})^{cl, \leq \nu} \simeq (M_{\underline{k}}^{\prime})^{fs,\leq \nu} $. Hence the condition (CSC2) is satisfied for all cases. Having defined classical structure on $\E$ and $\E^{\prime}$, we need to construct an $\HH$-equivariant injective map $(M_{x}^{\prime})^{cl, ss} \hookrightarrow (M_{x})^{cl, ss}$ for all $x \in X$, to get the map $\beta: \E^{\prime} \hookrightarrow \E$. Thus we want to show, every $\HH$ system of eigenvalue appearing in $(M_{x}^{\prime})^{cl, ss}$ also appears as an $\HH$ system of eigenvalue in $(M_{x})^{cl, ss}$.\\

\noindent Let $\chi$ be a $\HH$ system of eigenvalue appearing in $(M_{\underline{k}}^{\prime})^{cl, ss}$. By lemma \ref{sevlifting} we can lift $\chi$ to $\tilde{\chi}$ a $\HH_{1} \otimes \HH_{2}$ system of eigenvalue in $(M_{\underline{k_1}}^{\prime})^{cl, ss}$. Then by lemma \ref{sevtensor} $\tilde{\chi} = \chi_{1} \otimes \chi_{2}$, where $\chi_{i}$ is a $\HH_{i}$ system of eigenvalue appearing in $((F(W_{\underline{k_{1}}}(L)^{\ast})^{fs}\otimes \delta_{\underline{k_{i}}})  \otimes \QQ_{p})^{ss} $. So $\chi_{i}$ corresponds to $(\pi,\RR_{i})$, where $\pi_{i}$ is an automorphic representation of $U(n_{i})$ of weight $\underline{k_{i}}$ and $\RR_{i}$ is an accessible refinement of $\pi_{i,p}$. Let $\pi$ be the endoscopic transfer of $\pi_{1}$ and $\pi_{2}$. Let $\RR = (\mu_{1}(Frob_{p})\RR_{1}, \mu_{2}(Frob_{p})\RR_{2})$. Let $\chi^{\prime}$ be the $\HH$ system of eigenvalue in $(M_{\underline{k}})^{cl, ss}$ corresponding to $(\pi,\RR)$. We want to show $\chi^{\prime} = \chi $. To prove this it is enough to show $\chi_{p}^{\prime} = \chi_{p}$ and $\chi_{l}^{\prime} = \chi_{l}$ for $l \neq p$.

\begin{lem}\label{localendoatl}
$\chi_{l}^{\prime} =  \chi_{l}$, where $\chi_{l}$ and $\chi_{l}^{\prime}$ are as described above.
\end{lem}

\begin{proof}
Let $\pi_{i,l}$ be the unramified representation of $GL_{n_{i}}$ corresponding to $\chi_{i,l}$ Since $\pi_{1,l}$ and $\pi_{2,l}$ are unramified representaions of $GL_{n_{1}}$ and $GL_{n_{2}}$ respectively,they are determined by their Satake parameters say $(\lambda_{1,l},\cdots,\lambda_{n_{1},l})$ and $(\lambda_{1,l}^{\prime},\cdots, \lambda_{n_{2},l}^{\prime})$ respectively. \\
 Under the endoscopic transfer map, $\pi_{1,l}$ and $\pi_{2,l}$ maps to a unramified representation of $GL_{n}$ as follows:\\
 \begin{adjustbox}{max size = {\textwidth}}
 $
 \xymatrix{
 \Z \ar[r] &  {^L}{U(n_1)\times U(n_2)} \ar[r]^{\xi} &  {^L}{U(n)} \\ 
 Frob_{l} \ar@{|->}[r] & ( diag(\lambda_{1,l},\cdots,\lambda_{n_{1},l}), diag(\lambda_{1,l}^{\prime},\cdots, \lambda_{n_{2},l}^{\prime})) \times Frob_{l} \ar@{|->}[r] &  diag(\lambda_{1,l},\cdots,\lambda_{n_{1},l},\lambda_{1,l}^{\prime},\cdots, \lambda_{n_{2},l}^{\prime}) \xi(Frob_{l})
 }
$
 \end{adjustbox}
Since $ \xi(Frob_{l}) = \xi( \mu_{1}(Frob_{l})I_{n_1},\mu_{2}(Frob_{l}) I_{n_2}) \times Frob_{l} $, we have under endoscopic transfer as in (\ref{unrendotrnsfr}),
 \begin{equation*}
  Frob_{l} \mapsto diag(\mu_{1}(Frob_{l})\lambda_1, \cdots,\mu_1(Frob_{l})\lambda_{n_{1}},\mu_2(Frob_{l})\lambda_{1}^{\prime},\cdots,\mu_2(Frob_{l}) \lambda_{n_{2}}^{\prime}) \times Frob_{l} .
 \end{equation*}
 Hence if $\pi_{l}$ is the endoscopic transfer of $ \pi_{1,l}$ and $\pi_{2,l}$, then the Satake parameter for $\pi_{l}$ is given by $(\mu_{1}(Frob_{l})\lambda_{1,l}, \cdots,\mu_1(Frob_{l})\lambda_{n_{1},l},\mu_2(Frob_{l})\lambda_{1,l}^{\prime},\cdots,\mu_2(Frob_{l}) \lambda_{n_{2},l}^{\prime})$. We want to show $\pi_{l}$ corresponds to $\chi_{l}$ in $M_{x}$. 
 Since Satake parameter of $\pi_{1,l}$ is $(\lambda_{1,l},\cdots,\lambda_{n_{1},l})$, which corresponds to $\chi_{1,l}$, we have,
 \begin{equation*}
 \chi_{1,l} : H_{1,l} \to \QQ_{p} 
 \end{equation*}
 \begin{equation*}
 \chi_{1,l}(s_{k,1,l}) = \sum_{1 \leq i_{1} < \cdots < i_{k} \leq n_{1}} \lambda_{i_{1},l}\cdots \lambda_{i_{k},l},
 \end{equation*}
 where $s_{k,1}$ is symmetric polynomial in $X_{1,l}, \cdots, X_{n_{1},l}$ of degree $k$. Similarly we have,
 \begin{equation*}
 \chi_{2,l}(s_{k,2,l}) = \sum_{1 \leq j_{1} < \cdots < j_{k} \leq n_{2}} \lambda_{j_{1},l}^{\prime} \cdots \lambda_{j_{k},l}^{\prime},
 \end{equation*}
 where $s_{k,2,l}$ is symmetric polynomial in $Y_{1,l}, \cdots, Y_{n_{2},l}$ of degree $k$.
 
\noindent  By the work of Satake, $\QQ_{p}[X_{1},\cdots,X_{n_{1}},X_{1}^{-1},\cdots,X_{n_{1}}^{-1}]$ is integral over $\QQ_{p}[X_{1},\cdots,X_{n_{1}},X_{1}^{-1},\cdots,X_{n_{1}}^{-1}]^{S_{n_{1}}}$, and any character of $\QQ_{p}[X_{1},\cdots,X_{n_{1}},X_{1}^{-1},\cdots,X_{n_{1}}^{-1}]^{S_{n_{1}}}$ can be lifted to a character of the ring $\QQ_{p}[X_{1},\cdots,X_{n_{1}},X_{1}^{-1},\cdots,X_{n_{1}}^{-1}]$. Hence,  $\chi_{1,l}$ and $\chi_{2,l}$ can be extended to  character  $\tilde{\chi}_{1,l}$ and $\tilde{\chi}_{2,l}$ of $\QQ_{p}[X_{1},\cdots,X_{n_{1}},X_{1}^{-1},\cdots,X_{n_{1}}^{-1}]$ and $\QQ_{p}[Y_{1},\cdots,Y_{n_{2}},Y_{1}^{-1},\cdots,Y_{n_{2}}^{-1}]$ respectively. \\
 Let us assume $\tilde{\chi}_{1,l}(X_{i,l}) = \lambda_{i,l} $ and $\tilde{\chi}_{2,l}(Y_{j,l}) = \lambda_{j,l}^{\prime}$. \\
 
\noindent Let $\tilde{\chi_{l}} = \tilde{\chi}_{1,l} \otimes \tilde{\chi}_{2,l}$, then
 \begin{equation*}
 \tilde{\chi_{l}}(T_{i,l}) =
 \begin{cases}
 \tilde{\chi}_{1,l}(\mu_{1}(Frob_{l}))X_{i,l}) = \mu_1(Frob_{l}) \lambda_{i,l} & \text{if } 1 \leq i \leq n_{1}, \\
 \tilde{\chi}_{2,l}(\mu_{2}(Frob_{l})Y_{i- n_{1},l}) = \mu_2(Frob_{l}) \lambda_{i-n_{1},l}^{\prime} & \text{if } n_{1} < i \leq n.
 \end{cases}
 \end{equation*}
Note that $\tilde{\chi}_{l|\HH_{1,l} \otimes \HH_{2,l}}  = \chi_{1,l} \otimes \chi_{2,l}$ and $\tilde{\chi}_{l|H} = \chi_{l} $. Hence $\chi_{l}$ corresponds to $\pi_{l}$, the endoscopic transfer of $\pi_{1,l}$ and $\pi_{2,l}$.
This is independent of the choice of $\tilde{\chi}_{1,l}$ and $\tilde{\chi}_{2,l}$ as even if we choose different lifts, the values of $X_{i,l}$, $Y_{j,l}$, $s_{k,1,l}$,$s_{k,2,l}$ will be different, but the values of $s_{k,l}$, the symmetric polynomial in $T_{1,l}, \cdots,T_{n,l}$, will be same. And hence the Satake parameter corresponding to $\chi_{l}$ will be same. 
Since by definition of $\chi^{\prime}$, $\chi_{l}^{\prime}$ corresponds to $\pi_{l}$, we have $\chi_{l}^{\prime} = \chi_{l}$.
\end{proof}

\begin{lem}
$\chi_{p}^{\prime} =  \chi_{p}$, where $\chi_{p}$ and $\chi_{p}^{\prime}$ are defined as above.
\end{lem}

\begin{proof}
By the lemma \ref{sevtensor} , every system of $\A_p$ eigenvalue $\chi_p$ in $(M_{\underline{k}}^{\prime})^{cl,ss}$, is of the form $\chi_{p,1} \otimes \chi_{p,2}$, where $\chi_{p,i}$ is an $\A_{p,i}$ system of eigenvalue appearing in $(M_{\underline{k_{1}}}^{\prime})^{cl,ss}$. So $\chi_{p,i}$ corresponds to $(\pi_{p,i},\RR_{i})$, where $\pi_{p,i}$ is an unramified representation of $GL_{n_{i}}$ at p, and $\RR_{i}$ is an accessible refinement. Then as in equation (\ref{psip}), we have

\begin{equation}
\chi_{p,i|U} = \gamma_{i} \delta_{B_i}^{-1/2} \delta_{\underline{k_i}},
\end{equation}
\noindent where $B_i$ is the borel subgroup of $GL_{n_i}$,$\delta_{B_i}$ is the modulus character, $\gamma_i: U_i \to \C^{\ast} $ corresponds to the refinement $\RR_i$ and $\delta_{\underline{k_i}}$ as in (\ref{psip}). \\

\noindent {\bf Note:1} Since $ \underline{k} = (\underline{k_1},\underline{k_2})$, we have the following relation:
\begin{equation}\label{delta}
\delta_{\underline{k_1}}(diag(p^{\alpha_1},\cdots,p^{\alpha_{n_{1}}}))\delta_{\underline{k_2}}(diag(p^{\alpha_{n_{1}+1}},\cdots,p^{\alpha_n})) = \delta_{\underline{k}}(diag(p^{\alpha_1},\cdots,p^{\alpha_n}))
\end{equation}

\noindent {\bf Note : 2} We have the following relation for the modulus character,
 
 \begin{eqnarray*}
 \delta_{B}^{-1/2}(diag(p^{\alpha_1},\cdots,p^{\alpha_n})) & = & |p^{\alpha_1}|^{\frac{1-n}{2}} \cdots |p^{\alpha_{n_{1}}}|^{\frac{2n_{1}-1-n}{2}}|p^{\alpha_{n_{1}+1}}|^{\frac{2n_{1}+1-n}{2}} \cdots |p^{\alpha_n}|^{\frac{n-1}{2}} \\
 &=&  \delta_{B_1}^{-1/2}(diag(p^{\alpha_1},\cdots,p^{\alpha_{n_{1}}})) \delta_{B_2}^{-1/2}(diag(p^{\alpha_{n_{1}+1}},\cdots,p^{\alpha_n})) \\
 & & (p^{\alpha_1} \cdots p^{\alpha_{n_{1}}})^{\frac{n_{2}}{2}}(p^{\alpha_{n_{1}+1}} \cdots p^{\alpha_n})^{-\frac{n_{1}}{2}}. 
 \end{eqnarray*}
Hence we have, 
 \begin{equation}\label{modulus}
 \delta_{B_1}^{-1/2}(diag(p^{\alpha_1},\cdots,p^{\alpha_{n_{1}}})) \delta_{B_2}^{-1/2}(diag(p^{\alpha_{n_{1}+1}},\cdots,p^{\alpha_n})) p^{\frac{n_{2}}{2} (\alpha_1 + \cdots + \alpha_{n_{1}})- \frac{n_{1}}{2}(\alpha_{n_{1}+1} + \cdots + \alpha_n)} 
 \end{equation}
 \begin{equation*}
 = \delta_{B}^{-1/2}(diag(p^{\alpha_1},\cdots,p^{\alpha_n}))
 \end{equation*}
 
 \noindent For $\alpha_{i} \in \Z$, we have,
 \begin{eqnarray*}
 \chi_p(diag(p^{\alpha_1},\cdots,p^{\alpha_n})) &= & \psi_p(T_{1}^{\alpha_{1}}\cdots T_{n}^{\alpha_{n}}) \\
   & =&  \chi_{p,1} \otimes \chi_{p,2} (T_{1}^{\alpha_{1}}\cdots T_{n}^{\alpha_{n}})  \\
   &= &  \chi_{p,1} \otimes \chi_{p,2} ((\mu_1(Frob_{p})p^{\frac{n_{2}}{2}} X_1)^{\alpha_{1}} \cdots (\mu_{2}(Frob_{p}) p^{-\frac{n_{1}}{2}} Y_{n_{2}})^{\alpha_{n}} )  \\
   &= &  \chi_{p,1} \otimes \chi_{p,2} (\mu_1(Frob_{p})^{\alpha_{1}+\cdots+\alpha_{n_{1}}}\mu_2(Frob_{p})^{\alpha_{n_{1}+1}+\cdots+\alpha_{n}} \\
    & & p^{\frac{n_{2}}{2} (\alpha_1 + \cdots + \alpha_{n_{1}})- \frac{n_{1}}{2}(\alpha_{n_{1}+1} + \cdots + \alpha_n)}X_{1}^{\alpha_1}\cdots X_{n_{1}}^{\alpha_{n_{1}}}Y_{1}^{\alpha_{n_{1}+1}}\cdots Y_{n_{2}}^{\alpha_{n}} ) \\
    &= & \mu_1(Frob_{p})^{\alpha_{1}+\cdots+\alpha_{n_{1}}}\mu_2(Frob_{p})^{\alpha_{n_{1}+1}+\cdots+\alpha_{n}} p^{\frac{n_{2}}{2} (\alpha_1 + \cdots + \alpha_{n_{1}})- \frac{n_{1}}{2}(\alpha_{n_{1}+1} + \cdots + \alpha_n)}  \\
    &  &\chi_{p,1}(X_{1}^{\alpha_1}\cdots X_{n_{1}}^{\alpha_{n_{1}}}) \chi_{p,2}(Y_{1}^{\alpha_{n_{1}+1}}\cdots Y_{n_{2}}^{\alpha_{n}}) \\
   & =& \mu_1(Frob_{p})^{\alpha_{1}+\cdots+\alpha_{n_{1}}}\mu_2(Frob_{p})^{\alpha_{n_{1}+1}+\cdots+\alpha_{n}} p^{\frac{n_{2}}{2} (\alpha_1 + \cdots + \alpha_{n_{1}})- \frac{n_{1}}{2}(\alpha_{n_{1}+1} + \cdots + \alpha_n)} \\         &  & \chi_{p,1}(diag(p^{\alpha_1},\cdots,p^{\alpha_{n_{1}}})) \chi_{p,2}(diag(p^{\alpha_{n_{1}+1}},\cdots ,p^{\alpha_{n}})) \\
   & =& \mu_1(Frob_{p})^{\alpha_{1}+\cdots+\alpha_{n_{1}}}\mu_2(Frob_{p})^{\alpha_{n_{1}+1}+\cdots+\alpha_{n}} p^{\frac{n_{2}}{2} (\alpha_1 + \cdots + \alpha_{n_{1}})- \frac{n_{1}}{2}(\alpha_{n_{1}+1} + \cdots + \alpha_n)} \\         &  & \gamma_{1}(diag(p^{\alpha_1},\cdots,p^{\alpha_{n_{1}}})) \gamma_{2}(diag(p^{\alpha_{n_{1}+1}},\cdots ,p^{\alpha_{n}})) \\     
    &  &  \delta_{B_1}^{-1/2}(diag(p^{\alpha_1},\cdots,p^{\alpha_{n_{1}}}))\delta_{B_2}^{-1/2}(diag(p^{\alpha_{n_{1}+1}},\cdots,p^{\alpha_n}))\\
    &  & \delta_{\underline{k_1}}(diag(p^{\alpha_1},\cdots,p^{\alpha_{n_{1}}})) \delta_{\underline{k_2}}(diag(p^{\alpha_{n_{1}+1}},\cdots,p^{\alpha_n})) \\
    & = & \mu_1(Frob_{p})^{\alpha_{1}+\cdots+\alpha_{n_{1}}}\mu_2(Frob_{p})^{\alpha_{n_{1}+1}+\cdots+\alpha_{n}} \gamma_{1}(diag(p^{\alpha_1},\cdots,p^{\alpha_{n_{1}}})) \\
    & & \gamma_{2}(diag(p^{\alpha_{n_{1}+1}},\cdots ,p^{\alpha_{n}}))  \delta_{B}^{-1/2}(diag(p^{\alpha_1},\cdots,p^{\alpha_n}))\\
    &  & \delta_{\underline{k}}(diag(p^{\alpha_1},\cdots,p^{\alpha_n}))  \\
    & = & \gamma \delta_{B}^{-1/2} \delta_{\underline{k}} (diag(p^{\alpha_1},\cdots,p^{\alpha_n})),
\end{eqnarray*}
where $\gamma : U \to \CC^{\ast}$ is the character given by,
 \begin{eqnarray*}\label{endorefinement}
 \gamma(diag(p^{\alpha_1},\cdots,p^{\alpha_n})) & = &\mu_1(Frob_{p})^{\alpha_{1}+\cdots+\alpha_{n_{1}}}\mu_2(Frob_{p})^{\alpha_{n_{1}+1}+\cdots+\alpha_{n}} \gamma_{1}(diag(p^{\alpha_1},\cdots,p^{\alpha_{n_{1}}}))\\ 
 &  & \gamma_{2}(diag(p^{\alpha_{n_{1}+1}},\cdots ,p^{\alpha_{n}})).
 \end{eqnarray*}
Since $\chi_{p|U}$ is of the form $\gamma \delta_{B}^{-1/2} \delta_{\underline{k}}$, it appears as a system of eigenvalue for $\A_{p}$ in $M_{x}$ and the refinement it corresponds to is given by $\gamma$.\\

\noindent If $\RR_1 = (\phi_1,\cdots,\phi_{n_{1}})$ and $\RR_2 = (\phi_1^{\prime},\cdots,\phi_{n_{2}}^{\prime})$, 
then $\phi_{i} = \gamma_{1}(diag(1,\cdots,1,p,1,\cdots,1))$ and $\phi_{j}^{\prime} = \gamma_{2}(diag(1,\cdots,1,p,1,\cdots,1))$. Thus we have,
\begin{equation*}
\gamma(diag(1,\cdots,1,p,1,\cdots,1)) =
\begin{cases}
\gamma_{1}(diag(1,\cdots,1,p,1,\cdots,1) \mu_{1}(Frob_{p})  & \text{if } 1 \leq i \leq n_{1}, \\
\gamma_{2}(diag(1,\cdots,1,p,1,\cdots,1) \mu_{2}(Frob_{p})  & \text{if } n_{1}< i \leq n.
\end{cases}
\end{equation*}
Hence we see that,
\begin{equation*}
\chi(diag(1,\cdots,1,p,1,\cdots,1)) =
\begin{cases}
\phi_{i} \mu_{1}(Frob_{p}) & \text{if } 1 \leq i \leq n_{1}, \\
\phi_{i-n_{1}}^{\prime} \mu_{2}(Frob_{p}) & \text{if } n_{1}< i \leq n.
\end{cases}
\end{equation*}

\noindent So $\gamma$ corresponds to the refinement given by,
\begin{eqnarray*}
\RR &=& (\phi_1 \mu_1(Frob_{p}), \cdots, \phi_{n_{1}} \mu_1(Frob_{p}), \phi_{1}^{\prime} \mu_2(Frob_{p}), \cdots, \phi_{n_{2}}^{\prime} \mu_2(Frob_{p}))\\
  &=& (\mu_1((Frob_{p})\RR_{1}, \mu_2(Frob_{p})\RR_2)).
\end{eqnarray*}
We see that, $\chi_{p}$  corresponds to $(\pi_{p},\RR)$ , where $\pi_{p}$ is the endoscopic transfer of $\pi_{p,1}$ and $\pi_{p,2}$, since as in the proof of the lemma \ref{localendoatl}, $\pi_{p}$ the endoscopic transfer of $\pi_{p,1}$ and $\pi_{p,2}$ has Langlands parameter given by 
$(\phi_1 \mu_1(Frob_{p}), \cdots, \phi_{n_{1}} \mu_1(Frob_{p}), \phi_{1}^{\prime} \mu_2(Frob_{p}), \cdots, \phi_{n_{2}}^{\prime} \mu_2(Frob_{p}))$. By definition of $\chi^{\prime}$, $\chi_{p}^{\prime}$ corresponds to $(\pi_{p}, \RR)$. Thus $\chi_{p}^{\prime} = \chi_{p}$.

\end{proof}

\noindent Thus we obtain a map, $f = \beta \circ \alpha : \E_{n_{1}} \times \E_{n_{2}} \to \E_{n}$. Let $((\pi_{1},\RR_{1}), (\pi_{2},\RR_{2}))$ be a point in $\E_{n_{1}} \times \E_{n_{2}}$, which corresponds to $\HH_{1}\otimes \HH_{2}$ system of eigenvalue $\chi_{1} \otimes \chi_{2}$ in $(M_{\underline{k}}^{\prime})^{cl}$ . Under the map $\alpha$, $\chi_{1} \otimes \chi_{2}$ maps to $\HH$ system of eigenvalue $\chi = (\chi_{1} \otimes \chi_{2})_{|\HH}$ in $(M_{\underline{k}}^{\prime})^{cl}$. Under the map $\beta$, $\chi$ maps to $\HH$ system of eigenvalue $\chi^{\prime} = \chi$ appearing in $(M_{\underline{k}})^{cl}$, which corresponds to $(\pi,\RR)$, where $\pi$ is the endoscopic transfer of $\pi_{1}$ and $\pi_{2}$ and $\RR = (\mu_1((Frob_{p})\RR_{1}, \mu_2(Frob_{p})\RR_2))$. 
\vskip 5mm

\end{proof}
\noindent
{\bf Acknowledgments.}\\

\noindent I would like to take this opportunity to thank professor Joel Bellaiche for being my advisor and for spending so much time and energy to share his great insights with me. I would also like to thank him for suggesting the problem and give various ideas to help me solving the problem. I would like to thank Baskar Balasubramanyam and Debargha Banerjee for their help during preparation of the manuscript. Most of the work was done during author's thesis at Brandeis University under the supervision of professor Joel Bellaiche. \vskip
5mm

\bibliographystyle{amsplain}% Bibliography.
\bibliography{endoscopy}

\vskip 5mm

\noindent {\bf Address of author:} \vskip 5mm

\noindent {\it Dipramit Majumdar}, Indian Institute of Science Education and Research, Pune, Dr Homi Bhaba Road, Pashan, Pune 411008, India.\\
{\em dipramit@gmail.com}

\end{document}